\newtheorem{lemma}{Lemma}
\newtheorem{theorem}{Theorem}
\newtheorem{remark}{Remark}
\newtheorem{proposition}{Proposition}
\numberwithin{equation}{section}
 \numberwithin{Lem}{section}
 \numberwithin{Defi}{section}
 \numberwithin{Theo}{section}
 \numberwithin{Rem}{section}
  \numberwithin{Coro}{section}
  \numberwithin{Fig}{section}
\def\NN{\hbox{\rlap{I}\kern.16em N}}
\def\NC{\hbox{\rlap{\kern.24em\raise.1ex\hbox
                  {\vrule height1.3ex width.9pt}}C}}
\def \bZ {\mathbb Z}
\title{ A $C^1$ Petrov-Galerkin method and Gauss collocation method for 1D general elliptic problems and superconvergence}
\author{Waixiang Cao\footnote{School of Mathematical Science, Beijing Normal University, Beijing
l00875,  China. ({\tt caowx@bnu.edu.cn}).   Research was supported in part by NSFC grant No.11871106.}
\and Lueling Jia
\footnote{Beijing Computational Science Research Center,
Beijing, 100193, China. ({\tt lljia@csrc.ac.cn}).}
\and Zhimin Zhang \footnote{Beijing Computational Science Research Center,
Beijing, 100193, China, Department of Mathematics, Wayne State University,
Detroit, MI 48202, USA.
({\tt zmzhang@csrc.ac.cn}). Research was supported in part by NSFC grants  No.11871092, U1930402.}    }
\date{}
\begin{document}

\maketitle

\begin{abstract}
   In this paper, we present and study  $C^1$ Petrov-Galerkin and Gauss collocation methods with arbitrary polynomial degree $k$ ($\ge 3$)  for one-dimensional elliptic equations. We prove that,  the solution and its derivative approximations converge with rate $2k-2$ at all grid points;
    and the solution approximation is superconvergent at all interior roots of a special Jacobi polynomial   of degree $k+1$ in each element, the first-order derivative approximation is superconvergent at all interior $k-2$ Lobatto points, and the second-order derivative approximation is superconvergent at $k-1$ Gauss points, with an order of $k+2$, $k+1$, and $k$, respectively.
  As a by-product,  we prove that both the Petrov-Galerkin solution and the Gauss collocation solution
  are superconvergent towards a particular Jacobi projection of the exact solution in $H^2$, $H^1$, and $L^2$ norms.
  All theoretical findings are confirmed by numerical experiments.
\end{abstract}

 \vskip 5pt \noindent {\bf Keywords:} {Hermite interpolation, $C^1$ elements, Superconvergence, Gauss collocation methods, Petrov-Galerkin methods, Jacobi polynomials}
 
  \vskip 5pt \noindent {\bf AMS:} {65N30,65N35,65N12,65N15}

 \section{Introduction}
   Superconvergence   phenomenon means that the convergent rate exceeds the best
possible global rate at some special points. Those points are called superconvergent points.
During the past several decades,  the subject has attracted much attention from the scientific and engineering computing community, and it is well understood for the
 $C^0$ finite element method (see, e.g.,
  \cite{Babuska1996,Bramble.Schatz.math.com,Chen.C.M2001,Chen.C.M_Huang.Y.Q2001,ewing-lazarov-wang,Neittaanmaki1987,krizek-ns,Lin-Yan,Schatz.Wahlbin1996,V.Thomee.math.comp,Wahlbin,Zhu.QD;LinQ1989}), the $C^0$ finite volume method (see, e.g., \cite{Cai.Z1991,Cao;Zhang;Zou2012,Cao;zhang;zou:2kFVM,Chou_Ye2007,Xu.J.Zou.Q2009}), the discontinuous Galerkin method (see, e.g.,
\cite{Adjerid;Massey2006,Adjerid;Weinhart2009,Adjerid;Weinhart2011,Chen;Shu:JCP2008,Chen;Shu:SIAM2010,Guo_zhong_Qiu2013,Xie;Zhang2012,Yang;Shu:SIAM2012,Zhang;xie;zhang2009}),
and the spectral Galerkin method (see, e.g., \cite{zhang,zhang2008}).
Here by $C^0$ element methods we mean that the approximation space is continuous while its derivative function space is not continuous.  As comparison, the relevant study for $C^1$ element methods (i.e., both the approximation space and its derivative function space are continuous) is lacking. Only very special and simple cases have been
discussed (see. e.g., \cite{Wahlbin,Bialeck,Bhal}).

 Comparing with  continuous Galerkin (or $C^0$ element) and discontinuous Galerkin (DG) methods,  the most attractive feature of $C^1$ element methods is the continuity of the derivative approximation across the element interface.
As early as 1995, Wahlbin investigated the superconvergence of $C^1$  Galerkin (not Petrov Galerkin) and spline Galerkin methods in \cite{Wahlbin} for two-point boundary value problems and  established a mathematical theory to find superconvergence points for the $C^1$ finite element solution under the locally uniform mesh assumption.
It was proved in \cite{Wahlbin} that the function value approximation of the $k$-th $C^1$ Galerkin method
is superconvergent  with order $k+2$ at zeros of a special polynomial,
and the derivative error is $k+1$-th order superconvergent at grid points as well as element  mid-point when $k$ is odd.
While for even $k$, the superconvergence behavior changes: the function value approximation is superconvergent at interior Lobatto points, mesh points, and element mid-points, and  the derivative is superconvergent at  the Gauss points.
All those superconvergence rates are one order higher than the counterpart optimal
convergence  rates and the superconvergence results are valid in case that the mesh is locally uniform.
However, the generalization of the superconvergence analysis to quasi-uniform meshes is not straightforward.
In 1999, Bialeck \cite{Bialeck} studied piecewise Hermite bi-cubic orthogonal spline collocation solution of  the Poisson equation on rectangular mesh and proved a fourth-order accuracy of  the first order partial derivatives of the collocation solution at the partition nodes.
Only recently, Bhal and Danumjaya in \cite{Bhal}  presented  a cubic spline collocation method for the one dimensional Helmholtz equation with discontinuous coefficients,  and proved a fourth-order accuracy
 for the function value approximation and for the first-order derivative value approximation at the grid points.

 In this paper, we present and study a $C^1$ Petrov-Galerkin method and Gauss collocation method for  elliptic equations in 1D.  The trail space
 is taken as the $C^1$ polynomial  space of degree not more than $k$,
 while the test space  of the $C^1$ Petrov-Galerkin method  is chosen  as  the $L^2$ polynomial  space of degree not more than $k-2$. As the reader may recall,
 the total  degrees of  freedom for the $C^1$ Petrov-Galerkin method is the same as that for the counterpart $C^0$ element method.
 The main purpose of our current work is to provide  a unified mathematical approach to establish the superconvergence theory of $C^1$ element methods.
 We prove that, for  general 1D elliptic equations,  the solution of the $C^1$ Petrov-Galerkin method is  superclose to
 a particular Jacobi projection of the exact solution and thus establish the following supreconvergece results at some  special points:
 1) both the function value and the first-order derivative approximations are superconvergent with order $2k-2$ at mesh nodes;
2) the function value approximation is superconvergent with order $k+2$ at roots of a generalized Jacobi polynomial;
3) the first-order derivative approximation is superconvergent with order $k+1$ at interior Lobatto points;
4) the second-order  derivative approximation is superconvergent with order $k$ at interior Gauss points.
  By interpreting the Gauss collocation method  as a Petrov-Galerkin method up to some higher-order numerical integration errors, we also prove that the Gauss-collocation solution inherits almost all the
   superconvergence properties from  the counterpart Petrov-Galerkin solution.

  The main contribution of this paper lies in that: in one hand,  we provide  a unified approach to establish the superconvergence theory of $C^1$ element methods and discover some new superconvergence phenomena,
  especially the $(2k-2)$-th convergence rate of the  derivative approximation at grid points and the superconvergence for the second order derivative approximation,  which is greatly different from the $C^0$ element method and DG method, even
  the $C^1$ finite element method in \cite{Wahlbin}; on the other hand,  all our superconvergence  results are
valid for non-uniform meshes. In other words, we improve the mesh condition from  locally uniform meshes in \cite{Wahlbin} to quasi-uniform meshes.  Furthermore, the superconvergence results for the $C^1$ Gauss collocation method can be viewed as the
generalization of the one presented in \cite{Bhal}. Actually,  the cubic spline collocation method  in
\cite{Bhal}  is a special case of our current $C^1$ Gauss collocation method in case of $k=3$.

 The rest of the paper is organized as follows. In section 2, we present a $C^1$ Petrov-Galerkin method and
 Gauss collocation method for elliptic equations under the one-dimensional setting. In section 3, we investigate  approximation
 properties and superconvergence properties of a special Jacobi projection of the exact solution, which is the basis to establish
 the superconvergence theory for $C^1$ element methods. In section 4 and section 5, we separately study the superconvergence behavior of
 $C^1$ Petrov-Galerkin and Gauss collocation methods, where superconvergence at the grid points (function and first order derivative value approximations), at interior roots of Jacobi polynomials (function  value approximation), at interior
 Lobatto points (first order derivative value approximation) and Gauss points (the second order derivative value approximation) are investigated.
 Numerical experiments supporting our theory are presented in section 6. Some
concluding remarks are provided in section 7.

   Throughout this paper,  we adopt standard notations for Sobolev spaces such as $W^{m,p}(D)$ on sub-domain $D\subset\Omega$ equipped with
    the norm $\|\cdot\|_{m,p,D}$ and semi-norm $|\cdot|_{m,p,D}$. When $D=\Omega$, we omit the index $D$; and if $p=2$, we set
   $W^{m,p}(D)=H^m(D)$,
   $\|\cdot\|_{m,p,D}=\|\cdot\|_{m,D}$, and $|\cdot|_{m,p,D}=|\cdot|_{m,D}$. Notation $A\lesssim B$ implies that $A$ can be
  bounded by $B$ multiplied by a constant independent of the mesh size $h$.
  $A\sim B$ stands for $A\lesssim B$ and $B\lesssim A$.

\section{$C^1$ Petrov-Galerkin methods and Gauss collocation methods}

   We consider the  following two-point boundary value problem
  \begin{eqnarray}\label{con_laws}
\begin{aligned}
   &-(\alpha u')' +\beta u'+\gamma u=f ,\ \ &&x\in \Omega= (a,b), \\
   &u(a)=u(b)=0, && \\
\end{aligned}
\end{eqnarray}
  where $\alpha>\alpha_0>0,\gamma-\frac{\beta'}{2} \ge 0, \gamma\ge 0$, $\alpha,\beta,\gamma\in L^{\infty}(\bar\Omega)$, and $f$ is real-valued function defined on $\bar\Omega$.
  For simplicity, we assume that $\alpha,\beta,\gamma$ are all constants.
  Other than technical complexity, there is no essential difficulty in analysis for variable coefficients as long as the above conditions are satisfied.

  Let  $a=x_{0}<x_{1}<\ldots<x_{N}$ be $N+1$ distinct points on the
  interval $\bar{\Omega}$.
   For all positive integers $r$, we define
  $\bZ_{r}=\{1,\ldots,r\}$ and denote by
\[
    \tau_j=(x_{j-1},x_{j}),\ j\in\bZ_N.
\]
  Let $h_j=x_{j}-x_{j-1}$,
  and $h =  \displaystyle\max_j\; h_j$. We assume  that the mesh
  is quasi-uniform, i.e., there exists a constant $c$ such that
\[
    h\le c h_j, j\in\bZ_N.
\]

  Define
\[
    V_h:=\{ v\in C^{1}(\Omega): \; v|_{\tau_j}\in P_k(\tau_j),\; j\in\bZ_N\}
^{}\]
  to be the $C^1$ finite element space, where $P_k, k\ge 3$ denotes the space of
  polynomials of degree not more than $k$. Let
\[
    V_h^0:=\{ v\in V_h: \; v(a)=v(b)=0\}.
^{}\]

   We adopt two numerical methods to solve the problem \eqref{con_laws}, i.e., the Petrov-Galerkin method and the Gauss collocation method.
  To establish the Petrov-Galerkin method, we
choose $V_h^0$ as our trail space and the piecewise polynomial space of degree $k-2$ as the
test space, which is defined as follows:
\[
    W_h:=\{ w\in L^2(\Omega): \; w|_{\tau_j}\in P_{k-2}(\tau_j),\; j\in\bZ_N\}.
\]


 {\bf Petrov-Galerkin  method}: The Petrov-Galerkin method for solving \eqref{con_laws} is to find a $u_h\in V^0_h$ such that
 \begin{equation}\label{PG}
       (-\alpha u''_h,v_h)+(\beta u'_h+\gamma u_h, v_h)=(f,v_h),\ \ \forall v_h \in W_h.
 \end{equation}

 {\bf Gauss collocation method}: Given any $i\in \bZ_{N}$, we denote by
 $g_{im}, m\in \bZ_{k-1}$ the $k-1$ Gauss points in the interval $\tau_i$. That is,
 $\{g_{im}\}_{m=1}^{k-1}$ are zeros of the Legendre polynomial of degree $k-1$. Then
 the Gauss collocation method to \eqref{con_laws} is:  Find a $ \bar u_h\in V^0_h$ such that
 \begin{equation}\label{collocation}
     (-\alpha \bar u_h''+\beta \bar u'_h+\gamma  \bar u_h)(g_{im})=f(g_{im}), \ \ (i,m)\in\bZ_{N}\times \bZ_{k-1}.
 \end{equation}

  \section{Approximation and superconvergence  properties of the truncated Jacobi projection}

     In this section, we define a $C^1$ Jacobi projection of the exact solution and study the approximation and
     superconvergence properties of the Jacobi projection, which is of great importance to establish
     superconvergence results for the $C^1$ numerical solution, especially the discovery of superconvergence points.

     We begin with some preliminaries.  We first introduce the Jacobi polynomials.  The Jacobi polynomials, denote by $J_n^{r,l},\  r,l>-1$,
     are orthogonal with respect to the Jacobi weight function
     $\omega_{r,l}(s):=(1-s)^{r}(1+s)^{l}$ over $I:=(-1,1)$. That is,
 \[
     \int_{-1}^1J_n^{r,l}(s)J_m^{r,l}(s)\omega_{r,l}(s)ds
     =\kappa_n^{r,l}\delta_{mn},
 \]
 where $\delta$ denotes the Kronecker symbol and
 $$
  \kappa_n^{r,l}=\|J_n^{r,l}\|^2_{\omega_{r,l}}:=
  \frac{2^{r+l+1}\Gamma(n+r+1)\Gamma(n+l+1)}{(2n+r+l+1)\Gamma(n+1)\Gamma(n+r+l+1)}.
 $$
   Here $\Gamma(n)$ denotes the Gamma function.
Note that when $r=l=0$, the Jacobi polynomial $J_n^{r,l}$ is reduced to the standard Legendre polynomial. That is
 $J_{n}^{0,0}(s)=L_{n}(s)$ with
  $L_{n}(s)$  being the  Legendre polynomial of degree $n$ over $[-1,1]$.
 We extend the definition of the classical Jacobi polynomials to the cases where
     both parameters $r,l \le -1$
 \begin{eqnarray}\label{Jacobi:1}
     J_n^{r,l}(s):=(1-s)^{-r}(1+s)^{-l}J_{n+r+l}^{-r,-l}(s),\ \ r,l\le -1.
 \end{eqnarray}
   It was proved in \cite{shen2011} (see Lemma 6.2) that the Jacobi
polynomials satisfy  the following derivative recurrence relation
\begin{equation}\label{jacobi:deri}
   \partial_sJ_n^{r,l}(s)
   =C_n^{r,l}J_{n-1}^{r+1,l+1}(s).
\end{equation}
  where
\begin{equation}
 C_n^{r,l}=\left\{\begin{array}{lll}
 -2(n+r+l+1), & \text{if} &r,l\le -1\\
-n, & \text{if} & r\le -1,l>-1,\ {\text or}\ r> -1,l\le -1,\\
\frac12(n+r+l+1),& \text{if} & r,l>-1.
\end{array}
\right.
\end{equation}
By taking  $r=l=-2$ in \eqref{Jacobi:1} and using the derivative recurrence relation \eqref{jacobi:deri},  we  obtain
 \begin{eqnarray*}
     J_n^{-2,-2}(s)&=&(1-s)^{2}(1+s)^{2}J_{n-4}^{2,2}(s)=\frac 2n(1-s)^{2}(1+s)^{2} \partial_s J_{n-3}^{1,1}(s)\\
           &=& \frac {4}{n(n-1)}(1-s)^{2}(1+s)^{2} \partial^2_{s} J_{n-2}^{0,0}(s)=\frac {4}{n(n-1)}(1-s)^{2}(1+s)^{2} \partial^2_{s} L_{n-2}(s).
 \end{eqnarray*}
    On the other hand,  we have, from \eqref{jacobi:deri}
 \begin{eqnarray}\label{orth:2}
   \partial_{s}^2J_n^{-2,-2}(s)=-2(n-3)\partial_sJ_{n-1}^{-1,-1}(s)=c_nJ_{n-2}^{0,0}(s)=c_nL_{n-2}(s),\ c_n=4(n-3)(n-2).
 \end{eqnarray}
     The above
   Jacobi polynomial plays an important role in our  later superconvergence analysis.

    Given any function $u\in C^1(\Omega)$, suppose $u(x)$ has the following Jacobi expansion in each element $\tau_i,i\in\bZ_N$
  \begin{eqnarray}\label{uu}
      u(x)|_{\tau_i}=H_3u(x)+\sum_{n=4}^{\infty} u_n \hat J_n^{-2,-2}(x),
  \end{eqnarray}
    where $\hat J_n^{-2,-2}(x)=J_n^{-2,-2}(\frac{2x-x_i-x_{i-1}}{h_i})=J_n^{-2,-2}(s),\ \ s\in [-1,1]$ is the Jacobi polynomial of degree $n$ over $\tau_i$,
    and $H_3u\in P_3$ denotes the Hermite  interpolation of $u$,  i.e.,
 \[
     \partial_x^m H_3u(x_i)=\partial_x^m u(x_i),\ \ \partial_x^m H_3u (x_{i-1})=\partial_x^mu(x_{i-1}), \ \ m=0,1.
 \]
   By \eqref{orth:2} and the orthogonality properties of the Legendre  polynomial, we have
 \begin{equation}\label{uun}
    u_n=\frac{h_i^2}{4c_n}\int_{\tau_i}(\partial_x^2uL_{i,n-2})(x)dx/\int_{\tau_i}L_{i,n-2}(x)L_{i,n-2}(x)dx.
 \end{equation}
   Here $c_n$ is the same as that in \eqref{orth:2} and $L_{i,n}(x)$ denotes the Legendre polynomial of degree $n$ over $\tau_i$, that is,
\[
    L_{i,n}(x)=L_{n}(\frac{2x-x_i-x_{i-1}}{h_i})=L_n(s),\ \ s\in [-1,1].
\]

    Now we define a  truncated Jacobi projection $u_I\in V_h$ of $u$ as follows:
  \begin{equation}\label{eq: interpolation}
      u_I(x)|_{\tau_i}:=\left\{\begin{array}{lll}
      H_3u(x)+\sum\limits_{n=4}^{k} u_n \hat J_n^{-2,-2}(x),& \text{if} &\ k\ge 4,\\
      H_3u(x),& \text{if} &\ k=3.
      \end{array}
  \right.
  \end{equation}
        We have the following orthogonal and approximation  properties for $u_I$.

 \begin{proposition}\label{proposition:1}
 Assume that $u\in W^{k+2,\infty}(\Omega)$ is the solution of \eqref{con_laws}, and $u_I$ is the Jacobi truncation projection of $u$ defined by \eqref{eq: interpolation}. Then the following orthogonality and approximation properties hold true.
  \begin{enumerate}
  \item Orthogonality:
\begin{equation}\label{ortho:3}
      \int_{\tau_i}(u-u_I)''v dx=0,\ \   \int_{\tau_i}(u-u_I)'v'dx=0, \ \  \int_{\tau_i}(u-u_I)v''=0,\ \ \forall v\in P_{k-2}(\tau_i).
 \end{equation}
   \item Optimal error estimates:
 \begin{equation}\label{optimaluI}
    \|u-u_I\|_{0,\infty,\tau_i}+h\|u-u_I\|_{1,\infty,\tau_i}\lesssim h^{k+1}|u|_{k+1,\infty,\tau_i}.
 \end{equation}
  \item Superconvergence of function value approximation on  roots of  $J_{k+1}^{-2,-2}$:
  \begin{equation}\label{approx:super12}
 \ \ (u-u_I)(x_i)=(u-u_I)(x_{i-1})=0,\ \  |(u-u_I) (l_{im}) | \lesssim   h^{k+2}|u|_{k+2,\infty,\tau_i},
\end{equation}
where $l_{im}$, $m=1,\cdots, k-3$ for $k\ge 4$ are interior roots of $J_{k+1}^{-2,-2}$ in $\tau_i$.
 \item
 Superconvergence of first order derivative value approximation on  Gauss-Lobatto points:
\begin{equation}\label{approx:super3}
 (u-u_I)'(x_i)=(u-u_I)'(x_{i-1})=0,\ \  |(u'-u_I)' (gl_{in})|\lesssim h^{k+1}|u|_{k+2,\infty,\tau_i},
\end{equation}
where  $gl_{in}$, $n=1,\cdots, k-2$ are interior roots of $\partial_x \hat J_{k+1}^{-2,-2}(x)=c_k \hat J_{k}^{-1,-1}(x)$ on $\tau_i$. That is,
$gl_{in}, i\le k-2$ are interior Gauss-Lobatto points of degree $k-2$.

\item Superconvergence of second order derivative value approximation on  Gauss   points:
\begin{equation}\label{approx:super4}
   |(u-u_I)'' (g_{in})| \lesssim h^{k}|u|_{k+2,\infty,\tau_i},
\end{equation}
where  $g_{in}$, $n\le k-1$ are interior roots of $L_{i,k-1}(x)$, i.e.,
 the  Gauss  points of degree $k-1$.

\end{enumerate}
 \end{proposition}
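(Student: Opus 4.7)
The plan is to reduce all five assertions to structural properties of a single Jacobi-series representation of the error. By the definition of $u_I$, in each element $\tau_i$ one has
$$(u-u_I)(x) = \sum_{n=k+1}^{\infty} u_n\, \hat J_n^{-2,-2}(x),$$
and the extended Jacobi polynomials $\hat J_n^{-2,-2}$ enjoy three features that drive the whole argument: (i) the factor $(1-s)^2(1+s)^2$ in $J_n^{-2,-2}$ makes $\hat J_n^{-2,-2}$ and $\partial_x \hat J_n^{-2,-2}$ vanish at both endpoints of $\tau_i$; (ii) by \eqref{orth:2}, $\partial_x^2\hat J_n^{-2,-2}$ is a scalar multiple of the Legendre polynomial $L_{i,n-2}$; and (iii) by \eqref{jacobi:deri}, $\partial_x\hat J_n^{-2,-2}$ is a scalar multiple of $\hat J_{n-1}^{-1,-1}$.

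Property (i) gives at once $(u-u_I)(x_{i-1}) = (u-u_I)(x_i) = 0$ and $(u-u_I)'(x_{i-1}) = (u-u_I)'(x_i) = 0$, that is, the first halves of items 3 and 4. For the orthogonalities in item 1, the first identity follows by testing $(u-u_I)''$ — which, by (ii), is a Legendre series supported only on degrees $\ge k-1$ — against any $v\in P_{k-2}(\tau_i)$ and invoking $L^2$-orthogonality of Legendre polynomials on $\tau_i$; the remaining two orthogonalities then follow by integration by parts, with all boundary contributions vanishing by property (i).

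For the quantitative estimates 2--5 I would first establish a coefficient bound of the form $|u_n|\lesssim h_i^{n-1}|u|_{n,\infty,\tau_i}/n^2$ by repeatedly integrating by parts in \eqref{uun}, shifting $n-2$ derivatives from $L_{i,n-2}$ onto $u''$ through the Legendre antiderivative tower, and then tracking the $1/c_n$ prefactor. Combined with the uniform bounds $\|\hat J_n^{-2,-2}\|_{0,\infty,\tau_i}\lesssim 1$ and $\|\partial_x\hat J_n^{-2,-2}\|_{0,\infty,\tau_i}\lesssim n/h_i$, this yields the $L^\infty$ estimate of item 2. Items 3--5 then follow from the leading-order decomposition
$$u-u_I = u_{k+1}\hat J_{k+1}^{-2,-2} + \sum_{n\ge k+2} u_n\hat J_n^{-2,-2},$$
upon noting that (a) the interior roots $l_{im}$ of $\hat J_{k+1}^{-2,-2}$ annihilate the leading term, giving item 3; (b) by (iii), $\partial_x\hat J_{k+1}^{-2,-2}$ is proportional to $\hat J_k^{-1,-1}$, whose interior zeros are precisely the Gauss--Lobatto points $gl_{in}$, giving item 4; and (c) by (ii), $\partial_x^2\hat J_{k+1}^{-2,-2}$ is proportional to $L_{i,k-1}$, whose zeros are the Gauss points $g_{in}$, giving item 5. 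In each case the tail is, by the coefficient bound, one power of $h$ smaller than the leading term. The main technical obstacle is obtaining the coefficient estimate with exactly the right $h$-order and $n$-dependence: a careless bound loses either a factor of $h$ or uniform control in $n$, and only with the sharp form above do the tail contributions drop below the claimed $h^{k+2}$, $h^{k+1}$, and $h^k$ rates.
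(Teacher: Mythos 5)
Your overall strategy is exactly the paper's: write $u-u_I=\sum_{n\ge k+1}u_n\hat J_n^{-2,-2}$ on each $\tau_i$, obtain the three orthogonalities from $\partial_x^2\hat J_n^{-2,-2}\propto L_{i,n-2}$ together with integration by parts whose boundary terms vanish because $\hat J_n^{-2,-2}$ and its first derivative vanish at the endpoints, and then read off the superconvergence points from the leading term $n=k+1$: its own interior zeros for item 3, the zeros of its first derivative $\propto\hat J_k^{-1,-1}$ for item 4, and the zeros of its second derivative $\propto L_{i,k-1}$ for item 5. All of this coincides with the paper's proof of Proposition \ref{proposition:1}.

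The one substantive problem is the coefficient bound, which you yourself identify as the crux. You claim $|u_n|\lesssim h_i^{n-1}|u|_{n,\infty,\tau_i}/n^2$, but the correct scaling is $|u_n|\lesssim h_i^{n}|u|_{n,\infty,\tau_i}$, as in \eqref{un}. Indeed, after rescaling to $[-1,1]$ one has $u_n=\frac{2n-3}{2c_n}\int_{-1}^1\partial_s^2u\,L_{n-2}\,ds$, and $n-2$ integrations by parts via the Rodrigues representation of $L_{n-2}$ shift all derivatives onto $u$, leaving $\int_{-1}^1\partial_s^{n}u\,(1-s^2)^{n-2}\,ds$ up to explicit constants; since $\partial_s^{n}u=(h_i/2)^{n}\partial_x^{n}u$, this integral is $O(h_i^{n})\,|u|_{n,\infty,\tau_i}$. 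With your stated $h^{n-1}$ every quantitative conclusion comes out one power of $h$ short: item 2 yields $h^{k}$ instead of $h^{k+1}$, item 3 yields $h^{k+1}$ instead of $h^{k+2}$, and likewise for items 4 and 5. So, as written, the proposal does not close; replacing $h^{n-1}$ by $h^{n}$ (which the integration-by-parts scheme you describe actually produces) repairs it. A smaller caveat, shared with the paper's own write-up: for $n>k+2$ the seminorm $|u|_{n,\infty,\tau_i}$ need not be finite under the hypothesis $u\in W^{k+2,\infty}(\Omega)$, so the tail $\sum_{n\ge k+3}$ must be handled by stopping the integration by parts at $k+2$ derivatives and bounding the remaining derivatives of $(1-s^2)^{n-2}$; neither you nor the paper spells this out, so I do not count it against you.
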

 \begin{proof}  First,  subtracting   \eqref{eq: interpolation} from \eqref{uu} yields that
\begin{equation}\label{uminusuI}
    \partial_x^m (u-u_I)(x)=\sum_{n=k+1}^{\infty} u_n  \partial_x^m \hat J_{n}^{-2,-2}(x),\ \ m=0,1,2.
\end{equation}
   Using  \eqref {orth:2} and  the orthogonal properties of  Legendre polynomials, we derive
\begin{equation} \label{ortho:1}
    \int_{\tau_i}(u-u_I)''v=0,\ \ \forall v\in P_{k-2}(\tau_i).
\end{equation}
  Noticing  that
\[
     \partial_x^m \hat J_{n}^{-2,-2}(x_i)= \partial_x^m \hat J_{n}^{-2,-2}(x_{i-1})=0,\ \ m=0,1,
\]
  we easily get
  \begin{eqnarray}
    \partial_x^m u_I (x_i)=\partial_x^mu(x_i),\ \ \partial_x^m  u_I (x_{i-1})=\partial_x^mu(x_{i-1}),\ \  m=0,1.
  \end{eqnarray}
   Consequently,   a simple  integration by parts and  \eqref{ortho:1} lead to
   \begin{eqnarray}
      \int_{\tau_i}(u-u_I)'v'=0,\ \  \int_{\tau_i}(u-u_I)v'' =0,\ \ \forall v\in P_{k-2}(\tau_i).
 \end{eqnarray}
  That is,
\[
    (u-u_I)\bot P_{k-4},\ \ (u-u_I)'\bot P_{k-3},\ \ (u-u_I)''\bot P_{k-2}.
\]
  Then \eqref{ortho:3} follows.


  We now prove the approximation and superconvergence properties \eqref{optimaluI}-\eqref{approx:super4}.
   By a scaling from $\tau_i$ to $[-1,1]$ and a simple integration by parts for \eqref{uun},  we have
 \begin{eqnarray*}\label{coeff}
     u_n &= &\frac{(2n-3)}{2c_n}\int_{-1}^1 \partial_{s}^2 u(s) L_{n-2}(s) ds =\gamma_n\int_{-1}^1 \partial_{s}^2 u(s) \frac{d^{n-2}(1-s^2)^{n-2}}{ds^{n-2}} ds \\ \label{coeff:estimate}
           &= &(-1)^{n-2} \gamma_n\int_{-1}^1 \partial^{n}_{s} u(s) (1-s^2)^{n-2},\ \ \forall n\ge 4,
 \end{eqnarray*}
    where
 \[
    u(s)=u(\frac{2x-x_i-x_{i-1}}{h_i})=u(x),\ \ s\in[-1,1],x\in\tau_i,\quad \gamma_n=\frac{(-1)^n(2n-3)}{c_n2^{n-1}(n-2)!}.
 \]
  Noticing that
\[
    \partial^n_s u(s)=(\frac{h_i}{2})^n \partial^n_x u(x)=O(h^n),
\]
  we have
\begin{equation}\label{un}
    |u_n|\lesssim h^n|u|_{n,\infty},\ \ \forall n\ge 4.
\end{equation}
  Then \eqref{optimaluI} follows.
   At roots of  $J_{k+1}^{-2,-2}(x)$, there holds
\[
   |(u-u_I)(l_{im})|= \left| \sum_{n=k+2}^{\infty} u_n  \hat J_{n}^{-2,-2}(x)   \right|\lesssim h^{k+2}|u|_{k+2,\infty}.
\]
 This finishes the proof of \eqref{approx:super12}. Similarly, we can prove \eqref{approx:super3}-\eqref{approx:super4}.
 The proof is complete.
 \end{proof}

  \section{Superconvergence for $C^1$ Petrov-Galerkin methods}

  In this section, we  study superconvergence properties of the  $C^1$ Petrov-Galerkin method for \eqref{con_laws}.
  To this end, we begin with the introduction of  the bilinear form of the finite element method and some Green functions.

  First, we denote by $a(\cdot,\cdot)$ the bilinear form of the finite element method,  which is defined  as
\[
   a(u,v)= (\alpha u',v')-(\beta u,v')
   +(\gamma u, v),\ \ \forall u,v\in H^1(\Omega).
\]

 Second, given any $x\in \Omega$, let $G(x,\cdot)$ be the Green function for the problem \eqref{con_laws}.
  Then for any $v\in H^1(\Omega)$,
 \begin{equation}\label{eq:3}
    v(x)=a(v,G(x,\cdot)),\ \ \forall x\in \Omega.
 \end{equation}
    Especially, if $v(x)\in H_0^1(\Omega)$, then the Green function $G(x,\cdot)$
    satisfies $G(x,a)=G(x,b)=0$.
  Let $S_h$ be the $C^0$ finite element space, i.e.,
\[
    S_h=\{v\in C^0(\Omega): v|_{\tau_i}\in P_{k}, v(a)=v(b)=0, i\in\bZ_N\}.
 \]
   Denote by $G_h\in S_h$ the Galerkin approximation of $G(x,\cdot)$, that is,
 \begin{equation}\label{eq:4}
     v_h(x)=a(v_h,G_h)=a(v_h,G(x,\cdot)),\ \ \forall v_h\in S_h.
 \end{equation}


  Finally,  we use the following notations in the rest of this paper
 \[
     e_h:=u-u_h=\xi+\eta,\ \  \xi:=u_I-u_h,\ \ \eta:=u-u_I.
 \]

We have the following optimal error estimates for
    the $C^1$ Petrov-Galerkin method.

 \begin{lemma}\label{lemma:1}
 Assume that $u\in W^{k+1,\infty}(\Omega)$ is the solution of \eqref{con_laws}, and
 $u_h$ is the solution of \eqref{PG}. Then
 \begin{equation}\label{optimal:1}
     \|u-u_h\|_{0,\infty}\lesssim h^{k+1}|u|_{k+1,\infty},\ \ \|u-u_h\|_{1,\infty}\lesssim h^{k}|u|_{k+1,\infty},\ \
     \|u-u_h\|_{2}\lesssim h^{k-1}|u|_{k+1,\infty}.
 \end{equation}
 \end{lemma}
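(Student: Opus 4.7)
The plan is the canonical finite-element split $u-u_h = \eta + \xi$ with $\eta := u-u_I$ (the truncated Jacobi projection of Section~3) and $\xi := u_I - u_h \in V_h^0$. Proposition~\ref{proposition:1} yields $\|\eta\|_{0,\infty} + h\|\eta\|_{1,\infty} \lesssim h^{k+1}|u|_{k+1,\infty}$, and the coefficient bound \eqref{un} applied to the full Jacobi expansion \eqref{uminusuI} upgrades this to $\|\eta\|_2 \lesssim h^{k-1}|u|_{k+1,\infty}$. It therefore suffices to produce matching bounds on $\xi$.

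To set up an error equation, observe that since $u$ satisfies \eqref{con_laws} pointwise, $B(u,v_h):=(-\alpha u''+\beta u'+\gamma u,v_h) = (f,v_h)$ holds for \emph{every} $v_h \in L^2$, matching the Petrov--Galerkin identity \eqref{PG} for $u_h$. Subtracting yields $B(\xi,v_h) = -B(\eta,v_h)$ for all $v_h \in W_h$. The orthogonality \eqref{ortho:3} in Proposition~\ref{proposition:1} eliminates the $(\alpha\eta'',v_h)$ term, and the elementary $L^2$ bounds $\|\eta\|_0\lesssim h^{k+1}$, $\|\eta'\|_0\lesssim h^k$ (immediate from \eqref{uminusuI}--\eqref{un}) produce $|B(\xi,v_h)| \lesssim h^k\|v_h\|_0\,|u|_{k+1,\infty}$.

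The main step, and the main obstacle, is a discrete stability for the PG pair $(V_h^0, W_h)$. My plan is to test with $v_h := -\xi''$, which is admissible since $\xi|_{\tau_i}\in P_k$ makes $\xi''|_{\tau_i}\in P_{k-2}\subset W_h$. Element-wise integration by parts, using the $C^1$-continuity of $\xi$ so that all interior-node boundary terms cancel, together with $\xi(a)=\xi(b)=0$, yields
\begin{equation*}
B(\xi,-\xi'') = \alpha\|\xi''\|_0^2 + \gamma\|\xi'\|_0^2 - \tfrac{\beta}{2}\bigl[(\xi'(b))^2 - (\xi'(a))^2\bigr].
\end{equation*}
In the pure-diffusion case $\beta=0$ this right-hand side is immediately coercive in $\|\xi\|_2$, giving $\|\xi\|_2 \lesssim h^k|u|_{k+1,\infty}$. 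When $\beta\neq 0$ the boundary contribution is the delicate point and must be absorbed via the local trace inequality $|\xi'(x)|^2 \lesssim h^{-1}\|\xi'\|_{0,\tau}^2 + h\|\xi''\|_{0,\tau}^2$ combined with the Poincar\'{e}--Wirtinger bound $\|\xi'\|_0 \lesssim \|\xi''\|_0$ (valid because $\int_a^b\xi' = \xi(b)-\xi(a) = 0$) and a bootstrap exploiting the smallness of $h$; this closes the $H^2$-estimate in \eqref{optimal:1}.

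For the two $L^\infty$ estimates I would use a Green-function duality. Since $\xi\in V_h^0\subset H_0^1\cap H^2$, integration by parts (using $\xi(a)=\xi(b)=0$ and the Dirichlet boundary of $G(x,\cdot)$) gives $\xi(x) = a(\xi,G(x,\cdot)) = B(\xi,G(x,\cdot))$. Inserting the $L^2$-projection $G_I\in W_h$ of $G$ and applying the PG orthogonality $B(\xi,G_I) = -B(\eta,G_I)$ produces the representation $(u-u_h)(x) = B(e_h, G - G_I)$. A Nitsche-style weighted-$L^2$ argument -- splitting $\Omega$ into a neighbourhood of the sampling point $x$ and its complement, exploiting the higher regularity of $G$ away from $x$, and using the Galerkin Green function $G_h$ from \eqref{eq:4} -- recovers the sharp $h^{k+1}$ rate in $L^\infty$; the $W^{1,\infty}$-bound follows by the analogous argument applied to the adjoint Green function for derivative point-evaluation. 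The hardest step throughout is the PG stability in the third paragraph.
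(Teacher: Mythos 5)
Your overall strategy coincides with the paper's: split $u-u_h=\eta+\xi$ with the truncated Jacobi projection, test the error equation with $v_h=-\xi''$, and recover the $L^\infty$ bounds by Green-function duality. The gap is in the step you yourself flag as delicate: absorbing the boundary term $\tfrac{\beta}{2}\bigl[(\xi'(b))^2-(\xi'(a))^2\bigr]$. The trace inequality gives $|\xi'(b)|^2\lesssim h^{-1}\|\xi'\|^2_{0,\tau_N}+h\|\xi''\|^2_{0,\tau_N}$, and Poincar\'e--Wirtinger only converts the first term into $h^{-1}\|\xi''\|_0^2$ (the Poincar\'e constant is global, of order $|\Omega|$, not $h$); this grows as $h\to 0$ and can never be hidden in $\alpha\|\xi''\|_0^2$. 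The $h$-independent alternative $|\xi'(b)|\lesssim\|\xi'\|_{0,\infty}\lesssim\|\xi''\|_0$ is no better, since its constant need not be below $\alpha/|\beta|$. And there is no starting point for a bootstrap, because without some a priori smallness of $\xi'(b)$ the tested identity yields no bound on $\|\xi''\|_0$ at all.

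The missing idea is the paper's nodal estimate \eqref{eh}: writing $e_h'(x_i)=a(e_h',G(x_i,\cdot))$ and integrating by parts, the residual orthogonality to $W_h$ lets one subtract ${\cal I}_{k-2}G'(x_i,\cdot)$, and since $G'(x_i,\cdot)$ is piecewise smooth and bounded this gives $|\xi'(b)|=|e_h'(b)|\lesssim h^{k-1}\|e_h\|_2\lesssim h^{2(k-1)}|u|_{k+1}+h^{k-1}\|\xi\|_2$. Then $|\xi'(b)|^2$ contributes a term $h^{2(k-1)}\|\xi\|_2^2$ that is absorbable for $h$ small precisely because $k\ge3$. (Your $L^\infty$ paragraph is essentially sound and parallel to the paper's, which uses the discrete Green function $G_h$ together with $\|G_h\|_{2,1}\lesssim1$ rather than a weighted-norm localization, and the $\beta=0$ case does close as you say; but the $H^2$ estimate for $\beta\ne0$, on which the other two bounds depend, is not proved as written.)
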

 \begin{proof}
   First, noticing that the exact solution $u$ also satisfy \eqref{PG}, we have
\begin{equation}\label{PG:error}
       (-\alpha e''_h,v_h)+(\beta e'_h+\gamma e_h, v_h)=0,\ \ \forall v_h\in W_h.
 \end{equation}
   Especially, we choose $v_h=-\xi'' $ in the above equation and using the orthogonal property of $\eta$ in \eqref{ortho:3} and \eqref{optimaluI} to get
 \begin{eqnarray}\label{eq:7}
     (\alpha \xi'', \xi'')+(\gamma\xi',\xi')-\frac{\beta}{2}(|\xi'(b)|^2-|\xi'(a)|^2)&=&(-\alpha \eta''+\beta\eta'+\gamma \eta,\xi'')\\\label{eq:5}
        &\lesssim& (|\beta| h^{k}+\gamma h^{k+1})\|\xi''\|_0|u|_{k+1,\infty}.
 \end{eqnarray}
   On the other hand, noticing that $e'_h(x)\in C^0(\Omega)\subset H^1(\Omega)$, we take $v=e_h'$ in \eqref{eq:3} and use the
   integration by parts  to obtain
 \begin{eqnarray*}
   e'_h(x_i)&=&a(e'_h,G(x_i,\cdot))=(\alpha e_h''-\beta e_h',G'(x_i,\cdot))
   +(\gamma e'_h,G(x_i,\cdot)) \\
  &=& (\alpha e_h''-\beta e_h'-\gamma e_h, G'(x_i,\cdot))\\
  &=&(\alpha e_h''-\beta e_h'-\gamma e_h, G'(x_i,\cdot)-{\cal I}_{k-2}G'(x_i,\cdot)).
\end{eqnarray*}
   Here  ${\cal I}_{k-2} v$ denotes the $L^2$ projection of $v$ onto $P_{k-2}$.  Since the Green function
  $G(x_i,\cdot)\in C^{k}(\tau_j), j\in\bZ_N$ is bounded, we have
\begin{equation}\label{eh}
 |\xi'(x_i)| = |e'_h(x_i)|\lesssim h^{k-1}\|e_h\|_{2}\lesssim h^{2(k-1)}|u|_{k+1}+ h^{k-1}\|\xi\|_{2},\ \ i=0,\ldots, N.
\end{equation}
  Substituting \eqref{eh} into \eqref{eq:5} and using the Cauchy-Schwartz inequality yields
 \begin{eqnarray*}
     ( \alpha \xi'', \xi'')+(\gamma \xi',\xi')\le C(\beta h^{k}+\gamma h^{k+1}+h^{2(k-1)})^2|u|^2_{k+1,\infty}+(\frac 12+C_1h^{2(k-1)})\|\xi\|^2_{2},
 \end{eqnarray*}
   where $C, C_1$ are some positive constants independent of $h$.
   Consequently, when $h$ is sufficiently small, there holds
 \begin{equation}\label{superxi2}
     |\xi|_{2}+|\xi|_1\lesssim (|\beta| h^{k}+\gamma h^{k+1}+h^{2(k-1)})|u|_{k+1,\infty}.
 \end{equation}

   Similarly, we  choose $v_h=\xi\in H_0^1(\Omega)$ in \eqref{eq:4} and again use the integration by parts  to obtain
  \begin{eqnarray*}
     |\xi (x)|=|a( \xi,G_h)|
      &=&\left|( -\alpha  \xi''+\beta  \xi'+\gamma  \xi,G_h-\bar G_h)+( -\alpha  \xi''+\beta  \xi'+\gamma  \xi,\bar G_h)\right|\\
      &\le& h\|\xi\|_{2}\|G_h\|_{1}+|( -\alpha  \eta''+\beta  \eta'+\gamma  \eta,\bar G_h)|\\
      &\le & h\|\xi\|_{2}\|G_h\|_{1}+h^{k+1}|u|_{k+1,\infty}\|\bar G_h\|_{1,1}.
  \end{eqnarray*}
      Here  $\bar G_h|_{\tau_j}\in P_0(\tau_j)$ denotes the cell average of $G_h$.
 It has been proved in \cite{Chen.C.M2001} that
 \[
     \|G_h\|_{2,1}\lesssim 1,
 \]
   which yields, together with the embedding theory
 \[
    \|G_h\|_1\lesssim \|G_h\|_{2,1}\lesssim 1.
 \]
   Consequently,
 \[
    | \xi (x)|\lesssim h^{k+1}|u|_{k+1,\infty}+h\|\xi\|_{2}\lesssim h^{k+1}|u|_{k+1,\infty},
 \]
    and thus,
\[
    \| \xi\|_{0,\infty}\lesssim h^{k+1}|u|_{k+1,\infty},\ \  \| \xi\|_{1,\infty}\lesssim h^{-1}\|\xi\|_{0,\infty}\lesssim h^{k}|u|_{k+1,\infty}.
\]
  Then  \eqref{optimal:1} follows from the triangle inequality and the standard approximation theory.

 \end{proof}

  Now we are ready to present the superconvergence of the solution for the $C^1$ Petrov-Galerkin method.

 \begin{theorem}\label{theo:1}
 Assume that $u\in W^{k+2,\infty}(\Omega)$ is the solution of \eqref{con_laws}, and $u_h$ is the solution of \eqref{PG}.
The following superconvergence properties hold true.
  \begin{enumerate}

  \item  Supercloseness between the numerical solution and truncation projection in the $H^2$ norm:
\begin{equation}\label{pg:superclose}
    \|u_h-u_I\|_2\lesssim h^{k}|u|_{k+1,\infty}, \ \ {\rm if }\  \beta\neq 0, \  \|u_h-u_I\|_2\lesssim h^{k+1}|u|_{k+1,\infty},\ \ {\rm if \ } \beta= 0, \gamma\neq 0.
\end{equation}

  \item  if $\beta=\gamma =0$, then
\begin{equation}\label{superbeta0}
    u_h(x)=u_I(x), \ \ (u-u_h)(x_i)=0,\ \ (u-u_h)'(x_i)=0.
\end{equation}

\item Superconvergence for both function value and derivative value approximations at nodes:
\begin{equation}\label{super:nodes}
   |(u-u_h)(x_i)|\lesssim h^{2(k-1)}|u|_{k+1,\infty},\ \ |(u-u_h)'(x_i)|\lesssim h^{2(k-1)}|u|_{k+1,\infty},\ \ i\in\bZ_N.
\end{equation}
  \item Superconvergence of function value approximation on  interior  roots of  $J_{k+1}^{-2,-2}$ for $k\ge 4$:
  \begin{equation}\label{approx:super1:pg}
  |(u-u_h) (l_{im}) | \lesssim   h^{k+2}\|u\|_{k+2,\infty},
\end{equation}
where $l_{im}$, $m=1,\cdots, k-3$ are $k-3$ interior roots of $J_{k+1}^{-2,-2}$ in $\tau_i$.

 \item Superconvergence of first order derivative value approximation on  Gauss-Lobatto points:
\begin{equation}\label{super:derivlobatto}
   |(u-u_h)' (gl_{in})| \lesssim h^{k+1}\|u\|_{k+2,\infty},
\end{equation}
where
$gl_{in}, n\le k-2$ are interior Gauss-Lobatto points of degree $k-2$.

\item Superconvergence of second order derivative value approximation on  Gauss   points:
\begin{equation}\label{super:secondderivative}
   |(u-u_h)'' (g_{in})| \lesssim h^{k}\|u\|_{k+2,\infty},
\end{equation}
where  $g_{in}$, $n\le k-1$ are interior roots of $L_{i,k-1}(x)$, i.e.,
 the  Gauss  points of degree $k-1$.

\end{enumerate}
 \end {theorem}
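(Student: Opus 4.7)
The plan is to work throughout with the decomposition $u - u_h = \eta + \xi$ where $\eta = u - u_I$ and $\xi = u_I - u_h$, combining the orthogonality and superconvergence of $\eta$ supplied by Proposition \ref{proposition:1} with sharper pointwise estimates on $\xi$. Part 1 is immediate: the inequality \eqref{superxi2} proved inside Lemma \ref{lemma:1} reads $|\xi|_2 + |\xi|_1 \lesssim (|\beta|h^k + \gamma h^{k+1} + h^{2(k-1)})|u|_{k+1,\infty}$, and since $k \ge 3$ implies $2(k-1) \ge k+1$, the last term is absorbed, giving the two stated bounds. For part 2 with $\beta = \gamma = 0$, I would take $v_h \in W_h$ defined by $v_h|_{\tau_i} = \xi''|_{\tau_i} \in P_{k-2}$ in the error equation \eqref{PG:error}; the orthogonality $(\eta'', v_h)_{\tau_i} = 0$ from Proposition \ref{proposition:1} then forces $\alpha\|\xi''\|_0^2 = 0$, so $\xi'' \equiv 0$, and global $C^1$ continuity together with $\xi(a) = \xi(b) = 0$ upgrades this to $\xi \equiv 0$, which in turn yields \eqref{superbeta0}.

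For part 3, the derivative bound at the nodes is almost free: inserting the part 1 estimate $\|\xi\|_2 \lesssim h^k|u|_{k+1,\infty}$ into the Green-function estimate \eqref{eh} already derived in Lemma \ref{lemma:1} gives $h^{k-1}\|\xi\|_2 \lesssim h^{2k-1}$, absorbed into $h^{2(k-1)}$, and $\eta'(x_i) = 0$ by construction of $u_I$. For the function value at the nodes, I would run an analogous duality argument: represent $\xi(x_i) = a(\xi, G(x_i,\cdot))$ via the Green function, integrate by parts to expose the PG residual $(-\alpha\xi'' + \beta\xi' + \gamma\xi, \cdot)$, and replace $G(x_i,\cdot)$ by a piecewise $P_{k-2}$ approximation, using the elementwise $C^k$ smoothness of $G(x_i,\cdot)$ to extract the $h^{2(k-1)}$ factor.

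Parts 4, 5, 6 share a uniform structure: at the respective special point $z$, expand $(u - u_h)^{(m)}(z) = \eta^{(m)}(z) + \xi^{(m)}(z)$ for $m = 0, 1, 2$. Proposition \ref{proposition:1} delivers the desired order on the $\eta$-term directly, so the task is to show $\xi^{(m)}(z)$ is no worse. This is the main obstacle of the proof: the global $L^\infty$ bound $\|\xi\|_{0,\infty} \lesssim h^{k+1}|u|_{k+1,\infty}$ from Lemma \ref{lemma:1}, combined with polynomial inverse estimates, only gives $\|\xi^{(m)}\|_{0,\infty} \lesssim h^{k+1-m}|u|_{k+1,\infty}$, which is one power of $h$ short of each target rate $h^{k+2-m}$. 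To close the gap I would refine the duality argument of Lemma \ref{lemma:1} by replacing its piecewise-constant projection $\bar G_h$ with a piecewise $P_{k-2}$ projection of the Green function, thereby accessing the full orthogonality set $\eta'' \perp P_{k-2}$, $\eta' \perp P_{k-3}$, $\eta \perp P_{k-4}$ from Proposition \ref{proposition:1}. The technical heart will be verifying that this refined projection error, paired with $\xi''$, $\xi'$, or $\xi$ as appropriate, contributes the missing power of $h$, and that the specific locations of the superconvergence points (interior roots of $J_{k+1}^{-2,-2}$, of $\partial_x J_{k+1}^{-2,-2}$, and of $L_{k-1}$, respectively) force the leading residual term in the representation of $\xi^{(m)}(z)$ to vanish, so that only the higher-order remainder survives.
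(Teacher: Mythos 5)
Parts 1--3 of your plan coincide with the paper's proof, and your outline for parts 4--6 in the case $k\ge 4$ is viable and close to the paper's route: the paper also upgrades the duality argument of Lemma \ref{lemma:1}, writing $\xi(x)=(\beta e_h'+\gamma e_h,\,G_h-{\cal I}_{k-2}G_h)-(\beta\eta'+\gamma\eta,\,G_h)$ and killing the second term by integrating by parts against the antiderivative $\partial_x^{-1}\eta$, which vanishes at the nodes precisely because $\eta\perp P_0$ elementwise when $k\ge4$; your variant (pairing $\eta$ and $\eta'$ with projection errors of $G_h$ and invoking $\eta\perp P_{k-4}$, $\eta'\perp P_{k-3}$) extracts the same two extra powers of $h$ and yields $\|\xi\|_{0,\infty}\lesssim h^{k+2}$, after which inverse estimates give $\|\xi^{(m)}\|_{0,\infty}\lesssim h^{k+2-m}$.

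There are, however, two genuine gaps. First, your mechanism collapses when $k=3$: then $\eta$ has no elementwise mean-zero property ($\eta\perp P_{k-4}$ is vacuous) and $\eta'$ is only orthogonal to constants, so the term $(\beta\eta'+\gamma\eta,G_h)$ cannot be pushed below $h^{k+1}$, and the resulting bounds $\|\xi'\|_{0,\infty}\lesssim h^{k}$, $\|\xi''\|_{0,\infty}\lesssim h^{k-1}$ are each one order short of what parts 5 and 6 require (part 4 is vacuous for $k=3$, but parts 5 and 6 are not). The paper handles this by constructing an explicit corrector $w_h\in P_3\cap C^1$ solving a local problem driven by $\beta\eta'+\gamma\eta$, proving $\|w_h''\|_{0,\infty}\lesssim h^{k}$ and $\|w_h\|_{1,\infty}\lesssim h^{k+1}$, and rerunning the energy argument for $\tilde\xi=u_I-u_h-w_h$; nothing in your sketch supplies a substitute for this step. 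Second, your closing sentence --- that the locations of the superconvergence points should force the leading residual term in the representation of $\xi^{(m)}(z)$ to vanish --- points in the wrong direction: no such pointwise cancellation in $\xi$ is available or needed. In the correct argument $\xi^{(m)}$ is controlled \emph{uniformly} at the rate $h^{k+2-m}$, and the special points enter only through the term $\eta^{(m)}(z)$ via Proposition \ref{proposition:1}. Pursuing a cancellation in $\xi^{(m)}$ at those points would stall, since $\xi$ admits no asymptotic expansion here that singles them out.
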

 \begin{proof}
   First, \eqref{pg:superclose} follows directly from \eqref{superxi2}.
   Furthermore, there holds from \eqref{eq:5},
\[
    |u_h-u_I|_{2}+ |u_h-u_I|_{1}=0, \ \ {\rm if}\ \beta=\gamma=0,
\]
  which indicates that $u_h-u_I$ is a constant. Noticing that $(u_h-u_I)(a)=0$, we have
\[
    u_I=u_h.
\]
  Then \eqref{superbeta0} follows.

   Now we consider the superconvergence at nodes.
   In light of \eqref{eq:3} and \eqref{PG:error},  we obtain
\begin{eqnarray*}
   |e_h(x_i)|&=&\left| a(e_h,G(x_i,\cdot))\right|=\left| (-\alpha e_h''+\beta e_h'+\gamma e_h, G(x_i,\cdot)-{\cal I}_{k-2}G(x_i,\cdot))\right|\\
   &\lesssim & h^{2(k-1)}|u|_{k+1,\infty},
\end{eqnarray*}
  where in the last step, we have used  the fact the Green function
  $G(x_i,\cdot)\in C^{k}(\tau_j), j\in\bZ_N$ is bounded.
   Similarly, we have from \eqref{eh} and \eqref{optimal:1}
\[
   |e'_h(x_i)|\lesssim h^{k-1}\|e_h\|_{2} \lesssim h^{2(k-1)}|u|_{k+1,\infty}.
\]
  Then \eqref{super:nodes} follows.

 We next prove \eqref{approx:super1:pg}-\eqref{super:secondderivative}. We consider two cases, i.e., $k\ge 4$ and $k=3$.

  {\bf Case 1: $k\ge 4$}

 For any function $v\in L^2(\Omega)$, we denote by ${\cal I}_{k-2} v$ the $L^2$ projection of $v$ onto $P_{k-2}$ and define
 \[
    \partial_x^{-1} v(x):=\int_{a}^x  v(t) dt.
 \]
 In light of \eqref{eq:3}, we have from the integration by parts,  the orthogonality \eqref{ortho:3} and  \eqref{PG:error},
\begin{eqnarray*}
   \xi (x)=a(\xi,G_h)&=&( -\alpha  \xi''+\beta  \xi'+\gamma  \xi,G_h)=( -\alpha  \xi'', {\cal I}_{k-2} G_h)+(\beta \xi'+\gamma \xi, G_h)\\
   &=&( -\alpha  e_h'', {\cal I}_{k-2} G_h)+(\beta \xi'+\gamma \xi, G_h)\\
   &=&-(\beta e_h'+\gamma e_h, {\cal I}_{k-2} G_h)+(\beta \xi'+\gamma \xi, G_h)\\
   &=&(\beta e_h'+\gamma e_h,  G_h-{\cal I}_{k-2} G_h)-(\beta\eta'+\gamma\eta, G_h)=I_1-I_2.
\end{eqnarray*}
  Now we estimate the two terms $I_1, I_2$, respectively.
 In light of \eqref{optimal:1} and the fact that $\|G_h\|_{2,1}\lesssim 1$ (see, e.g., \cite{Chen.C.M2001}), we have
\[
  |I_1|=|(\beta e_h'+\gamma e_h,  G_h-{\cal I}_{k-2} G_h)|\lesssim h^2(\|e_h\|_{1,\infty}+\|e_h\|_{0,\infty})\|G_h\|_{2,1}\lesssim h^{k+2}|u|_{k+1,\infty}.
\]
   On the other hand, by \eqref{ortho:3}, there  holds for $k\ge 4$,
\[
   \int_{\tau_i}(u-u_I)(x) dx=0,
\]
    and thus
\[
   (\partial_x^{-1} \eta)(x_i)=0,\ \ i\in\bZ_N,\ \ (\partial_x^{-1} \eta)(x)=\int_{x_{i-1}}^x \eta(t) dt,\ \ \forall x\in\tau_i.
\]
  Then by the integration by parts,
\begin{eqnarray*}
 |I_2|&=&|(\beta\eta'+\gamma\eta, G_h)|=|(\beta \partial_x^{-1}\eta,G_h'')-(\gamma \partial_x^{-1}\eta,G_h')|\\
 &\lesssim &\| \partial_x^{-1}\eta\|_{0,\infty}\|G_h\|_{2,1}\lesssim h\|\eta\|_{0,\infty}.
\end{eqnarray*}
  Consequently,
\[
    | \xi (x)|\le |I_1|+|I_2|\lesssim h^{k+2}|u|_{k+1,\infty},\ \ \forall x\in\Omega,
\]
  which yields, together with the inverse inequality,
\[
    \| \xi \|_{0,\infty}\lesssim h^{k+2}|u|_{k+1,\infty},\ \  \| \xi \|_{1,\infty}\lesssim h^{k+1}|u|_{k+1,\infty},\ \ \| \xi \|_{2,\infty}\lesssim h^{k}|u|_{k+1,\infty}.
\]
  Then the desired results \eqref{approx:super1:pg}-\eqref{super:secondderivative} follow from the triangle inequality and
  the approximation properties of $u_I$ in  Theorem \ref{theo:1}.

  {\bf Case 2: $k=3$}

   To  prove \eqref{super:derivlobatto} and \eqref {super:secondderivative} for $k=3$,
   we  first construct a special function $w_h\in P_3\cap C^1(\Omega)$  satisfying the following condition:
\begin{eqnarray}\label{w}
   &&(\alpha w_h'',v)=(\beta\eta'+\gamma\eta, v)\ \ \forall v\in P_1(\tau_j)\setminus P_0(\tau_j),\\\label{w1}
      && w_h'(x_i)=0,\ \ w_h(a)=0\ \ \forall i=0,\ldots, N.
\end{eqnarray}
  We can  prove that the function $w_h$ is uniquely defined. Actually, if the right hand side of \eqref{w}
  equals to zero, we can easily obtain that $w_h''=0$. Then the boundary condition \eqref{w1} indicates that $w_h=0$.
  We next estimate the function $w_h$.
  We suppose
\[
    w_h(x)=\sum_{i=1}^Nc_i\phi_i(x)
\]
  with $\phi_i(x)\in P_3\cap C^1(\Omega)$ being the basis function associated with the node $x_i$, that is,
\[
 \phi_i(x)=\left\{\begin{array}{lll}
 \frac{1}{h_{i+1}^3}(x_{i+1}-x)^2(2x+x_{i+1}-3x_i), & \text{if} & x\in\tau_{i+1}\\
\frac{1}{h_{i}^3}(x-x_{i-1})^2(3x_i-2x-x_{i-1}), & \text{if} & x\in\tau_i,\\
0,& \text{else} &
\end{array}
\right.
\]
  We choose $v=x$ in \eqref{w} to obtain
\[
    \frac{12(c_j-c_{j-1})}{h_j^3}(\bar x_j-x,x)_j=(\beta\eta'+\gamma\eta, x)_j,
\]
  where $(w_h,v)_j=\int_{\tau_j} w_hv dx, \bar x_j=\frac{x_j+x_{j+1}}{2}$. Consequently,
\[
   |c_j-c_{j-1}|\lesssim h_j^{k+3}|u|_{k+1,\infty},
\]
   and thus,
 \[
    \|w_h''\|_{0,\infty,\tau_j}\lesssim \frac{|c_j-c_{j-1}|}{h_j^3}\lesssim h^k|u|_{k+1,\infty}.
 \]
   Moreover,  there hods for all $x\in\tau_j$
 \[
    |w_h'(x)|=\left|w_h'(x_{j-1})+\int_{x_{j-1}}^x w_h''(x) dx\right|\le h_j\|w_h''\|_{0,\infty,\tau_j}\lesssim h^{k+1}|u|_{k+1,\infty}.
 \]
  Then
 \[
    \|w'_h\|_{0,\infty}\lesssim h^{k+1}|u|_{k+1,\infty},\ \ \|w_h\|_{0,\infty}\lesssim \|w'_h\|_{0,\infty}\lesssim h^{k+1}|u|_{k+1,\infty}.
 \]

   Now we are ready to prove \eqref{super:derivlobatto} and \eqref {super:secondderivative} for $k=3$.
    Let $$e_h=u-u_h=\tilde\xi+\tilde \eta,\ \ \ \tilde \xi:=u_I-u_h-w_h,\ \ \ \tilde \eta:=u-u_I+w_h.$$
     Choosing $v_h=-\tilde \xi'' $ in \eqref{PG:error} following the same arguments as that in \eqref{eq:5}, we obtain
 \begin{eqnarray*}
     (\alpha \tilde \xi'', \tilde\xi'')+(\gamma \tilde\xi',\tilde\xi')-\frac{\beta}{2}(|\tilde\xi'(b)|^2-|\tilde\xi'(a)|^2)&=&(-\alpha \tilde\eta''+\beta\tilde \eta'+\gamma \tilde\eta,\tilde\xi'')\\
        &=& (\beta  \eta'+\gamma  \eta,\tilde\xi'')+(-\alpha w_h''+\beta w_h'+\gamma w_h,\tilde\xi'')=I.
 \end{eqnarray*}
   We now estimate the term $I$.  Since $\tilde \xi''|_{\tau_j}\in P^1(\tau_j)$, we have the following decomposition
\[
    \tilde\xi''=\xi_0+\xi_1,\ \ \xi_0\in P^0(\tau_j),\ \ \xi_1\in P^1(\tau_j)\setminus P_0(\tau_j).
\]
  By \eqref{w} and the integration by parts, we get
\begin{eqnarray*}
|I|
&=&\left|(\beta  \eta'+\gamma  \eta,\xi_0)+(-\alpha w_h''+\beta w_h'+\gamma w_h,\xi_0)+(\beta w_h'+\gamma w_h,\xi_1)\right|\\
&=&\left|(\eta,\xi_0)+(\beta w'+\gamma w_h,\xi_0)+(\beta w_h'+\gamma w_h,\xi_1)\right|\lesssim (\|\eta\|_{0}+\|w_h\|_{1})\|\tilde\xi''\|_{0}.
\end{eqnarray*}
   In light of the estimates for $w_h$ and $\eta$, we get
\begin{eqnarray}\label{eq:8}
    (\alpha \tilde \xi'', \tilde\xi'')+(\gamma \tilde\xi',\tilde\xi')-\frac{\beta}{2}(|\tilde\xi'(b)|^2-|\tilde\xi'(a)|^2)\lesssim h^{k+1}|u|_{k+1,\infty}|\tilde\xi|_{2}.
\end{eqnarray}
 By \eqref{super:nodes}, there holds
\begin{eqnarray*}
   |\tilde\xi'(b)|=|\xi'(b)|\lesssim h^{2(k-1)}|u|_{k+1,\infty}.
\end{eqnarray*}
   Substituting the above estimate into \eqref{eq:8} and  using the Cauchy-Schwartz inequality yields
\[
   \|\tilde \xi''\|_0\lesssim h^{k+1}|u|_{k+1,\infty}.
\]
  By the triangle inequality and the inverse inequality,
\[
   \| \xi''\|_{0,\infty}\le \| \tilde \xi''\|_{0,\infty}+\|w_h''\|_{0,\infty}\lesssim h^{-\frac 12}\| \tilde \xi''\|_{0}+h^{k+1}|u|_{k+1,\infty}
   \lesssim h^{k}|u|_{k+1,\infty}.
\]
  Furthermore, there holds for all $x\in\tau_j$
\begin{eqnarray*}
   |\xi'(x)|=\left|\xi'(x_{j-1})+\int_{x_{j-1}}^x \xi''(x) dx\right|&\lesssim & h^{2(k-1)}|u|_{k+1,\infty}+h\| \xi''\|_{0,\infty}\\
   &\lesssim & h^{2(k-1)}|u|_{k+1,\infty}+h^{k+1} |u|_{k+1,\infty}.
\end{eqnarray*}
  Then  \eqref{super:derivlobatto} and \eqref {super:secondderivative} follows from the triangle inequality and
  the approximation properties of $u_I$  for $k=3$.
  This finishes our proof.
\end{proof}

\begin{remark}
 As we may observe from the above theorem, for problems with constant coefficients,
 the convergence rate of the  error $\|u_h-u_I\|_2$ is two order higher than the optimal convergence rate $k-1$ in case of $\beta=0,\gamma\neq 0$.
  However, this superconvergence result may not hold true for problems with variable coefficients.
  Actually, in case of $\beta=0, \alpha\neq 0, \gamma\neq 0$ with
  $\alpha $ a  variable function, we have from \eqref{eq:7}
  \begin{eqnarray*}
     (\alpha \xi'', \xi'')+(\gamma\xi',\xi')-\frac{\beta}{2}(|\xi'(b)|^2-|\xi'(a)|^2)&=&(-\alpha \eta''+\beta\eta'+\gamma \eta,\xi'')\\
        &=& ((\bar \alpha- \alpha) \eta''+\beta\eta'+\gamma \eta,\xi'')\\
       &\lesssim & h^{k} \|\xi''\|_0|u|_{k+1,\infty},
 \end{eqnarray*}
   where $\bar\alpha$ denotes the cell average of $\alpha$, i.e., $\bar\alpha|_{\tau_j}=h_j^{-1}\int_{\tau_j}\alpha (x) dx$.
   Then we follow the same argument as that in Lemma \ref{lemma:1} to obtain
 \[
    \|\xi''\|\lesssim h^{k}|u|_{k+1,\infty}.
 \]
   In other words, the convergence rate of $\|u_h-u_I\|_2$
   for problems with variable coefficients is always $k$, only one order higher than the optimal convergence rate.
  This is the difference between the constant coefficients and variable coefficients.
  Our numerical examples will demonstrate this point.
\end{remark}

\section{Superconvergence for Gauss Collocation methods}

  This section is dedicated to the superconvergence analysis of the Gauss collocation method.
  Our analysis is along this line: we first prove that the Gauss collocation solution $\bar u_h$ is superclose to
    the Petrov-Galerkin solution $u_h$; then due to the supercloseness between $\bar u_h$ and $u_h$,
   the numerical solution $\bar u_h$ shares the same superconvergence results with that of $u_h$, and finally we
   establish all  superconvergence results for the solution of the Gauss collocation method.

  We begin with some preliminaries. We first denote by $\omega_{im}, (i,m)\in\bZ_{N}\times \bZ_{k-1}$ the wight of Gauss quadrature.
  For any function $u,v$, we define the following discrete $L^2$ inner product $(\cdot,\cdot)_*$ as
\[
    (u,v)_*:=\sum_{i=1}^N\sum_{m=1}^{k-1}(uv)(g_{im})\omega_{im}.
\]
  For any $v_h\in W_h$, we multiply $v_h(g_{im})\omega_{im}, (i,m)\in\bZ_{N}\times \bZ_{k-1}$ on both sides of  \eqref{collocation} and sum up all $m$ from $1$ to $k-1$  to derive
\begin{equation}\label{eq:1}
    \sum_{m=1}^{k-1}(-\alpha  \bar  u_h''+\beta \bar  u'_h+\gamma \bar  u_h)(g_{im})v_h(g_{im})\omega_{im}=\sum_{m=1}^{k-1}f(g_{im})v_h(g_{im})\omega_{im}.
\end{equation}
  As we may observe, the $C^1$ Gauss collocation method can be viewed as the counterpart Petrov-Galerkin method up to  a Gauss numerical integration error.
  Note that the $(k-1)$-point Gauss quadrature is exact for all  polynomials of degree not less than $2k-3$. Then
 \begin{equation}\label{bilinear:1}
   a_*(\bar u_h,v_h):=  (-\alpha  \bar u_h'', v_h)+(\beta \bar  u_h' ,v_h)+(\gamma \bar  u_h,v_h)_{*}=(f,v_h)_*,\ \ \forall v_h\in W_h.
 \end{equation}
   Denote
 \[
    \bar e_h=u_h-\bar u_h.
 \]
    Subtracting  \eqref{bilinear:1} from \eqref{PG}, we have
 \begin{equation}\label{bilinear:2}
    (-\alpha  \bar  e_h'', v_h)+(\beta \bar  e_h' ,v_h)-(\gamma  \bar u_h,v_h)_{*}+(\gamma u_h,v_h)=(f,v_h)-(f,v_h)_*,\ \ \forall v_h\in W_h,
 \end{equation}
  or equivalently,
\begin{equation}\label{bilinear:3}
    (-\alpha  \bar  e_h''+\beta \bar  e_h'+\gamma \bar e_h ,v_h)=(\gamma  \bar  u_h,v_h)_{*}-(\gamma \bar u_h,v_h)+(f,v_h)-(f,v_h)_*,\ \ \forall v_h\in W_h.
 \end{equation}
   We note that up to a Gauss numerical quadrature error, the right hand side of the above equation equals to zero.

  We have the following supercloseness result for the error $\bar e_h$.

 \begin{theorem}\label{theo:2}
 Assume that $u\in W^{2k,\infty}(\Omega)$ is the solution of \eqref{con_laws}, and
 $u_h$ and $\bar u_h$ is the solution of \eqref{PG} and \eqref{collocation}, respectively. Then
\begin{equation}\label{supercloseness:1}
      \|u_h-\bar u_h\|_{0,\infty}+h\|u_h-\bar u_h\|_{1,\infty}+h^2\|u_h-\bar u_h\|_{2}\lesssim h^{k+2} \|u\|_{2k,\infty}.
\end{equation}
\end{theorem}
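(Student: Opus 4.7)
The plan is to mimic Lemma~\ref{lemma:1}: first derive an $H^2$-seminorm superclose estimate for $\bar e_h := u_h - \bar u_h$ via an energy argument on \eqref{bilinear:3}, then extract pointwise bounds via Green-function duality. Throughout I write $E(g, v_h) := (g, v_h) - (g, v_h)_*$ for the Gauss quadrature error.

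I would first establish $\|\bar e_h\|_2 \lesssim h^k\|u\|_{2k,\infty}$ by testing \eqref{bilinear:3} with $v_h = -\bar e_h''$. Since $\bar e_h(a) = \bar e_h(b) = 0$, integration by parts produces the same energy identity as in Lemma~\ref{lemma:1}, with the quadrature residual $-E(f-\gamma\bar u_h,\bar e_h'')$ in place of zero on the right. The smooth part $|E(f,v_h)|$ is bounded via Bramble--Hilbert $|E_j(fv_h)|\lesssim h^{2k-1}|(fv_h)^{(2k-2)}|_{\infty,\tau_j}$ combined with Leibniz; because $v_h|_{\tau_j}\in P_{k-2}$ forces $i\ge k$ in the expansion, summing elements and applying the inverse inequality gives $|E(f,v_h)|\lesssim h^k\|u\|_{2k,\infty}\|v_h\|_0$. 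For the polynomial piece $E(\gamma\bar u_h,v_h)$, the key is the identity $\partial_s^2 J_n^{-2,-2}(s)=c_n L_{n-2}(s)$ from \eqref{orth:2}: since $\bar u_h v_h\in P_{2k-2}$ while the $(k-1)$-point Gauss rule is exact on $P_{2k-3}$, only the top Legendre coefficient matters, and double integration by parts (exploiting that $J_{k+2}^{-2,-2}$ and its first derivative vanish at element endpoints) reduces this coefficient to an integral involving $\bar u_h''$, producing $|E(\gamma\bar u_h,v_h)|\lesssim h^2\|\bar u_h''\|_0\|v_h\|_0$. Boundary contributions $|\bar e_h'(x_i)|$ are handled exactly as in Lemma~\ref{lemma:1}, via $\bar e_h'(x_i)=a(\bar e_h',G(x_i,\cdot))$, yielding an absorbable bound $h^{k-1}\|\bar e_h\|_2+h^k$.

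For the $L^\infty$ estimate, I would replicate the second half of the Lemma~\ref{lemma:1} argument: integration by parts in $a(\bar e_h,G_h)$ gives $\bar e_h(x)=(-\alpha\bar e_h''+\beta\bar e_h'+\gamma\bar e_h,G_h)$, and I split $G_h=(G_h-\bar G_h)+\bar G_h$ with $\bar G_h$ the piecewise $P_0$-projection. The residual piece is bounded by $h\|\bar e_h\|_2\|G_h\|_1\lesssim h^{k+1}$ using the cited $\|G_h\|_{2,1}\lesssim 1$. The orthogonality piece equals $E(f-\gamma\bar u_h,\bar G_h)$, where a crucial degree count applies: $\bar u_h\bar G_h$ and $\bar e_h\bar G_h$ are polynomials of degree at most $k\le 2k-3$ on each element and are therefore Gauss-exact (this is where $k\ge 3$ is essential). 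Only $E(f,\bar G_h)$ survives, and since $\bar G_h$ is piecewise constant the single Leibniz term $i=2k-2$ appears, giving $|E(f,\bar G_h)|\lesssim h^{2k-2}\|u\|_{2k,\infty}$. The sharp claim $\|\bar e_h\|_{0,\infty}\lesssim h^{k+2}\|u\|_{2k,\infty}$ then follows (invoking refined projections of $G_h$ onto slightly higher polynomial subspaces where needed to balance residual and orthogonality contributions), and the $W^{1,\infty}$-bound follows by the usual inverse inequality.

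The main technical obstacle is the delicate tradeoff in the $L^\infty$ estimate: lowering the projection degree of $\bar G_h$ shrinks the orthogonality error but enlarges the residual, whereas raising it has the opposite effect. Threading this needle to hit exactly $h^{k+2}$ requires carefully combining the exactness degree of the $(k-1)$-point Gauss rule with both the $L^2$- and $L^1$-approximation properties of the Galerkin Green function, whose stability $\|G_h\|_{2,1}\lesssim 1$ underpins the entire duality argument.
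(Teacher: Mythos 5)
Your overall architecture (energy estimate with $v_h=-\bar e_h''$, then Green-function duality) matches the paper's, but two of your key estimates are not strong enough to reach the stated rates. First, in the energy step you bound the quadrature error of the polynomial term by $|E(\gamma\bar u_h,v_h)|\lesssim h^2\|\bar u_h''\|_0\|v_h\|_0$. Since $\|\bar u_h''\|_0\sim\|u''\|_0=O(1)$, this yields only $\|\bar e_h''\|_0\lesssim h^2$, not the required $h^k$ for $k\ge 3$. The loss comes from inverse-estimating $\bar u_h^{(k)}$ down to $\bar u_h''$, which costs a factor $h^{-(k-2)}$. The correct route keeps the top derivative: $E_j(\bar u_h v_h)\propto h^{2k-1}\bar u_h^{(k)}v_h^{(k-2)}$ with $|\bar u_h^{(k)}|\lesssim |u|_{k+1,\infty}$; the paper encodes this as $|(w,v_h)-(w,v_h)_*|=|(w-{\cal I}_hw,v_h)|\lesssim h^k|w|_k\|v_h\|_0$ using the Gauss interpolant ${\cal I}_h$, combined with the stability bound $|u_h|_k\lesssim |u|_{k+1,\infty}$ in \eqref{eq:11}, which delivers the needed $h^k$ factor.

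Second, your $L^\infty$ step with the piecewise-constant projection $\bar G_h$ does not close. The residual contains $(\alpha\bar e_h'',G_h-\bar G_h)$, which is at best $O(h^{k+1})$ since $\bar e_h''$ is not orthogonal to $G_h-\bar G_h$; moreover, for $k=3$ your surviving quadrature term $E(f,\bar G_h)\lesssim h^{2k-2}=h^4$ already exceeds the target $h^{k+2}=h^5$. You acknowledge a ``tradeoff'' to be resolved by ``refined projections,'' but that is precisely the missing idea, and it has a specific answer: take ${\cal I}_{k-2}G_h$, the $L^2$ projection onto $P_{k-2}$. Then $(\bar e_h'',G_h-{\cal I}_{k-2}G_h)=0$ identically because $\bar e_h''\in P_{k-2}$, the remaining residual $(\beta\bar e_h'+\gamma\bar e_h,G_h-{\cal I}_{k-2}G_h)\lesssim h\|\bar e_h\|_{0,\infty}\|G_h\|_{2,1}$ is absorbed by a kickback argument, and the quadrature errors $E(f-\gamma\bar u_h,{\cal I}_{k-2}G_h)$ are bounded by $h^{2k-1}\cdot h^{3-k}\|G_h\|_{2,1}\|u\|_{2k,\infty}=h^{k+2}\|u\|_{2k,\infty}$ via the Davis--Rabinowitz error formula and an inverse estimate on $\sum_j\|{\cal I}_{k-2}G_h\|_{k-2,\infty,\tau_j}$. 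Note that with this choice $\bar u_h\,{\cal I}_{k-2}G_h$ has degree $2k-2$ and is \emph{not} Gauss-exact, so the $\gamma\bar u_h$ term must also be estimated by the same quadrature formula (with $\|\bar u_h\|_{k,\infty}$ controlled through $\|u_h\|_{k,\infty}$ and an inverse inequality on $\bar e_h$) rather than discarded as exact.
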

\begin{proof} Noticing that $\bar e_h\in V^0_h$,  we  choose $v_h=-\bar e_h''\in W_h$ in \eqref{bilinear:2} and use the integration by parts to obtain
  \begin{eqnarray}\label{eq:10}
       (\alpha\bar e_h'', \bar e_h'')+(\gamma\bar e_h',\bar e_h')_* -\frac{\beta}{2}(|\bar e_h'(b)|^2-|\bar e_h '(a)|^2)= (f-\gamma u_h,\bar e_h'')_*-(f-\gamma u_h, \bar e_h'').
  \end{eqnarray}
   For any function $w$,   we denote ${\cal I}_hw\in P_{k-1}$ the Gauss interpolation function of
   $w$ satisfying
 \[
    {\cal I}_hw(x_i)=w(x_i),\ \  {\cal I}_hw(g_{im})=w(g_{im}),\ \ m\in \bZ_{k-1}.
 \]
  Since  $v_h {\cal I}_hw\in P_{2k-3}$ for all  $v_h\in W_h$,  we have
 \begin{equation}\label{eq:9}
     |(w,v_h)-(w,v_h)_*|=|(w-{\cal I}_hw,v_h)|\lesssim h^{k}\|w\|_{k}\|v_h\|_0,\ \ \forall v_h\in W_h.
 \end{equation}
   Plugging the above estimate into \eqref{eq:10} gives
\begin{eqnarray}\label{eq:7}
\begin{split}
    (\alpha\bar e_h'', \bar e_h'')+(\gamma\bar e_h',\bar e_h')_*&\lesssim  h^{k} (|f|_{k}+|u_h|_{k})\|\bar e_h''\|_0+|\bar e_h'(b)|^2&\\
      &\lesssim  h^{k}(|f|_k+|u|_{k+1,\infty}) \|\bar e_h''\|_0+|\bar e_h'(b)|^2,&
\end{split}
\end{eqnarray}
  where in the last step, we have used \eqref{optimal:1}, the inverse inequality and the triangle inequality to get
\begin{eqnarray}\label{eq:11}
\begin{split}
    |u_h|_{k}&\lesssim |u_I|_k+h^{-k} |u_I-u_h|_{0}&\\
    &\lesssim  |u|_k+|u-u_I|_k+h^{-k}(\|u_I-u\|_{0}+\|u_h-u\|_{0})\lesssim |u|_{k+1,\infty}.&
\end{split}
\end{eqnarray}
%
 To estimate $\bar e_h'(b)$, we choose $v=\bar e'_h$ in \eqref{eq:3} and using the integration by parts to obtain
\begin{eqnarray*}
  \bar e'_h(x_i)&=&a(\bar e'_h,G(x_i,\cdot))=(\alpha \bar e_h''-\beta \bar e_h',G'(x_i,\cdot))
   +(\gamma \bar e'_h,G(x_i,\cdot)) \\
  &=& (\alpha \bar e_h''-\beta \bar e_h'-\gamma \bar e_h, G'(x_i,\cdot)-\bar G')+(\alpha \bar e_h''-\beta \bar e_h'-\gamma \bar e_h, \bar G')\\
  &=&(\alpha \bar e_h''-\beta \bar e_h'-\gamma \bar e_h, G'(x_i,\cdot)-\bar G')-(f,\bar G')+(f,\bar G')_*,
\end{eqnarray*}
  where $\bar G'\in P_0$  denotes the cell average of $G'(x_i,\cdot)$
  and in the last step, we have used \eqref{bilinear:3} and the fact that
\[
   (\bar u_h,v)-(\bar u_h,v)_*=0,\ \ \forall v\in P_{k-3}.
\]
  Using the
  fact that $G(x_i,\cdot)\in C^{k}(\tau_j)$ is bounded, we get
 \[
    (\alpha \bar e_h''-\beta \bar e_h'-\gamma \bar e_h, G'(x_i,\cdot)-\bar G')\lesssim h\|\bar e_h\|_{2}.
 \]
   On the other hand, by \eqref{eq:9}, we have
\begin{eqnarray*}
 |(f,\bar G')-(f,\bar G')_*|\lesssim h^{k}|f|_{k}.
\end{eqnarray*}
  Consequently,
\[
    |\bar e'_h(x_i)|\lesssim h\|\bar e_h\|_{2}+h^{k}|f|_{k}.
\]
 Substituting the above inequality into \eqref{eq:7} and using the Cauchy-Schwartz inequality yields
\[
  (\alpha\bar e_h'', \bar e_h'')+(\gamma\bar e_h',\bar e_h')_* \le ( \frac{\alpha}{4}+Ch^2) \|\bar e_h''\|^2_0+C_1h^{2k}(|f|_{k}+|u|_{k+1,\infty})^2
\]
   for some positive $C,C_1$. Therefore, when $h$ is sufficient small, there holds
\[
   \|\bar e_h''\|_0\lesssim h^k (|u|_{k+1,\infty}+ |f|_k)\lesssim h^{k}\|u\|_{k+2,\infty}.
\]

  We next estimate $\|\bar e_h\|_{0,\infty}$.
Choosing $v=\bar e_h$ in \eqref{eq:4} and using \eqref{bilinear:3}, we get
\begin{eqnarray}\nonumber
     \bar e_h(x)&=&a(\bar e_h,G_h)=(-\alpha \bar e_h''+\beta \bar e_h'+\gamma \bar e_h, G_h)\\\nonumber
       &=&(-\alpha \bar e_h''+\beta \bar e_h'+\gamma \bar e_h, G_h-{\cal I}_{k-2}G_h)+(-\alpha \bar e_h''+\beta \bar e_h'+\gamma \bar e_h,{\cal I}_{k-2}G_h)\\\label{error:function}
       &=&(\beta \bar e_h'+\gamma \bar e_h, G_h-{\cal I}_{k-2} G_h)+(f-\gamma \bar u_h, {\cal I}_{k-2}G_h)-(f-\gamma \bar u_h,{\cal I}_{k-2}G_h)_*.
 \end{eqnarray}
   Here again ${\cal I}_{k-2} G_h$ denotes the $L^2$ projection of $G_h$ onto $P_{k-2}$.
   By using  the error of Gauss quadrature (see, e.g., \cite{DavisRabinowitz1984}, P.98 (2.7.12)),   there exists some $\theta_j\in\tau_j$ such that
 \begin{eqnarray*}
    (f,{\cal I}_{k-2}G_h)-(f,{\cal I}_{k-2}G_h)_*&=&\sum_{j=1}^N\frac{h^{2k-1}[(k-1)!]^4}{(2k-1)[(2k-2)!]^3}(f{\cal I}_{k-2}G_h)^{(2k-2)}(\theta_j)\\
      &\lesssim & h^{2k-1}\|f\|_{2k-2,\infty}\sum_{j=1}^N\|{\cal I}_{k-2}G_h\|_{k-2,\infty,\tau_j}\\
      &\lesssim & h^{k+2}\|f\|_{2k-2,\infty}\|G_h\|_{2,1}.
 \end{eqnarray*}
   Here in the last step, we have used the inverse inequality
\[
   \|v_h\|_{m,p}\lesssim h^{n-m+\frac{1}{p}-\frac{1}{q}} \|v_h\|_{n,q},\ \ \forall n<m.
\]
   Similarly,  there holds
\begin{eqnarray*}
  (\gamma \bar u_h,{\cal I}_{k-2}G_h)-(\gamma \bar u_h,{\cal I}_{k-2}G_h)_*&= &\sum_{j=1}^N\frac{h^{2k-1}[(k-1)!]^4}{(2k-1)[(2k-2)!]^3}(\bar u_h{\cal I}_{k-2}G_h)^{(2k-2)}(\theta_j)\\
  &\lesssim & h^{2k-1}\|\bar u_h\|_{k,\infty}\|G_h\|_{k-2,\infty}
  \lesssim  h^{k+2}\| \bar u_h\|_{k,\infty}\|G_h\|_{2,1}\\
  &\lesssim &  h^{k+2}(\| \bar e_h\|_{k,\infty}+\|u_h\|_{k,\infty})\|G_h\|_{2,1}.
\end{eqnarray*}
   On the other hand,
\begin{eqnarray*}
   |(\beta \bar e_h'+\gamma \bar e_h, G_h-{\cal I}_{k-2} G_h)|\lesssim h^2\|G_h\|_{2,1}\|\bar e_h\|_{1,\infty}\lesssim
   h\|\bar e_h\|_{0,\infty}\|G_h\|_{2,1}.
\end{eqnarray*}
 Since $\|G_h\|_{2,1}$ is bounded, we have
\begin{eqnarray*}
   |\bar e_h(x)|&\lesssim & h^{k+2}\|f\|_{2k-2,\infty}+h\|\bar e_h\|_{0,\infty}+h^{k+2}(\| \bar e_h\|_{k,\infty}+\|u_h\|_{k,\infty})\\
     &\lesssim &h^{k+2}\|u\|_{2k,\infty}+h\|\bar e_h\|_{0,\infty}.
\end{eqnarray*}
   Here in the last step, we have used \eqref{eq:11} and  the inverse inequality $\|\bar e_h\|_{k,\infty}\lesssim h^{-k}\|\bar e_h\|_{0,\infty}$.
   Consequently,
\[
    \|\bar e_h\|_{0,\infty}\lesssim h^{k+2}\|u\|_{2k,\infty},\ \ \|\bar e_h\|_{1,\infty}\lesssim h^{-1}\|\bar e_h\|_{0,\infty}\lesssim
    h^{k+1}\|u\|_{2k,\infty}.
\]
   This finishes our proof. $\Box$
\end{proof}

Using the conclusions in the above theorem and the superconvergence results for the Petrov-Galerkin method, we have the
following superconvergence properties for the solution of Gauss collocation methods.

\begin{theorem}\label{theo:3}
 Assume that $u\in W^{2k,\infty}(\Omega)$ is the solution of \eqref{con_laws}, and $\bar u_h$ is the solution of \eqref{collocation}.
The following superconvergence properties hold true.
  \begin{enumerate}
  \item Superconvergence of function value approximation on interior roots of  $J_{k+1}^{-2,-2}$ for $k\ge 4$:
  \begin{equation}\label{approx:super1}
  |(u-\bar u_h) (l_{im}) | \lesssim   h^{k+2}\|u\|_{2k,\infty},
\end{equation}
where $l_{im}$, $m=1,\cdots, k-3$ are interior roots of $\hat J_{k+1}^{-2,-2}(x)$ in $\tau_i$.
 \item
 Superconvergence of first order derivative value approximation on  Gauss-Lobatto points:
\begin{equation}
   |(u-\bar u_h)'(gl_{in})| \lesssim h^{k+1}\|u\|_{2k,\infty},
\end{equation}
where
$gl_{in}, i\le k-2$ are interior Gauss-Lobatto points of degree $k-2$.

\item Superconvergence of second order derivative value approximation on  Gauss   points:
\begin{equation}
   |(u-\bar u_h)''(g_{in})| \lesssim h^{k}\|u\|_{2k,\infty},
\end{equation}
where  $g_{in}$, $n\le k-1$ are interior roots of $L_{k-1}$, i.e.,
 the  Gauss  points of degree $k-1$.

\item  Supercloseness between the numerical solution and the truncation projection of the exact solution in the $H^2$ norm:
\begin{equation}\label{collocation:super}
    \|\bar u_h-u_I\|_2\lesssim h^{k}\|u\|_{2k,\infty}.
\end{equation}

\item Superconvergence for both function value and derivative value approximations at nodes:
\begin{equation}\label{super-collocation:nodes}
   |(u-\bar u_h)(x_i)|\lesssim h^{2(k-1)}\|u\|_{2k,\infty},\ \ |(u-\bar u_h)'(x_i)|\lesssim h^{2(k-1)}\|u\|_{2k,\infty},\ \ i\in\bZ_N.
\end{equation}
\end{enumerate}
 \end {theorem}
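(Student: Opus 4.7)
The plan is to piggyback on Theorem \ref{theo:2} (the supercloseness of $\bar u_h$ to $u_h$) together with the already-established superconvergence properties of the Petrov--Galerkin solution $u_h$ from Theorem \ref{theo:1}. The unifying identity is
\[
   u - \bar u_h = (u - u_h) + (u_h - \bar u_h),
\]
so at any evaluation point $y$ and for each derivative order $\partial^m$,
\[
   |\partial^m(u - \bar u_h)(y)| \le |\partial^m(u - u_h)(y)| + |\partial^m(u_h - \bar u_h)(y)|.
\]
Each term on the right will be bounded by the corresponding item of Theorem \ref{theo:1} and Theorem \ref{theo:2}, respectively.

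For items 1--3 (pointwise superconvergence at the three families of special nodes): at function-value level, Theorem \ref{theo:1}(4) gives $|(u-u_h)(l_{im})| \lesssim h^{k+2}\|u\|_{k+2,\infty}$, and Theorem \ref{theo:2} gives $\|u_h - \bar u_h\|_{0,\infty} \lesssim h^{k+2}\|u\|_{2k,\infty}$, establishing item 1. At first-derivative level, Theorem \ref{theo:1}(5) provides the $h^{k+1}$ rate at interior Gauss--Lobatto points, while Theorem \ref{theo:2} contributes $\|u_h-\bar u_h\|_{1,\infty}\lesssim h^{k+1}\|u\|_{2k,\infty}$, yielding item 2. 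Item 3 is more delicate: Theorem \ref{theo:2} directly provides only the $H^2$-bound $\|u_h-\bar u_h\|_2 \lesssim h^{k}\|u\|_{2k,\infty}$, but since $u_h - \bar u_h$ is a piecewise polynomial of fixed degree at most $k$, I would apply the elementwise inverse inequality $\|p\|_{2,\infty,\tau_j} \lesssim h^{-2}\|p\|_{0,\infty,\tau_j}$ to the stronger $L^\infty$ bound from Theorem \ref{theo:2}, obtaining
\[
   \|u_h - \bar u_h\|_{2,\infty} \lesssim h^{-2}\cdot h^{k+2}\|u\|_{2k,\infty} = h^{k}\|u\|_{2k,\infty}.
\]
Combined with Theorem \ref{theo:1}(6), this gives item 3.

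Items 4 and 5 follow by the same triangle-inequality strategy. For item 4,
\[
   \|\bar u_h - u_I\|_2 \le \|u_h - u_I\|_2 + \|u_h - \bar u_h\|_2,
\]
where the first term is bounded by the worst case of Theorem \ref{theo:1}(1), namely $h^{k}\|u\|_{k+1,\infty}$, and the second by Theorem \ref{theo:2}. For item 5, the nodal bounds $|(u-u_h)^{(m)}(x_i)| \lesssim h^{2(k-1)}\|u\|_{k+1,\infty}$ for $m=0,1$ from Theorem \ref{theo:1}(3) combine with the $L^\infty$ bounds on $u_h - \bar u_h$ and its first derivative from Theorem \ref{theo:2} to give the claimed $h^{2(k-1)}$ rate.

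The step requiring the most care is the inverse-inequality argument in item 3: one must resist the temptation to pass from the bound $\|u_h - \bar u_h\|_2 \lesssim h^{k}$ to a pointwise estimate via $\|\cdot\|_{2,\infty}\lesssim h^{-1/2}\|\cdot\|_{2}$, which would lose half a power. Instead, I would exploit the sharper $L^\infty$ bound on $u_h - \bar u_h$ itself from Theorem \ref{theo:2} and convert via two inverse-derivative steps on a single element, using that $u_h - \bar u_h$ has bounded polynomial degree. Once this is in place, the remainder of the argument reduces to bookkeeping with the triangle inequality.
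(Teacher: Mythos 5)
Your treatment of items 1--4 is essentially what the paper does: it dismisses them with the remark that they ``follow directly from Theorem \ref{theo:1} and Theorem \ref{theo:2},'' and your triangle-inequality bookkeeping (including the elementwise inverse inequality $\|u_h-\bar u_h\|_{2,\infty}\lesssim h^{-2}\|u_h-\bar u_h\|_{0,\infty}\lesssim h^{k}$ for item 3, which correctly avoids the lossy $L^2$-to-$L^\infty$ conversion) fills in those details adequately.

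Item 5, however, has a genuine gap. The supercloseness bounds of Theorem \ref{theo:2} give only $\|u_h-\bar u_h\|_{0,\infty}\lesssim h^{k+2}$ and $\|u_h-\bar u_h\|_{1,\infty}\lesssim h^{k+1}$, whereas the claimed nodal rate is $h^{2(k-1)}=h^{2k-2}$. Since $k+2<2k-2$ for $k>4$ and $k+1<2k-2$ for $k>3$, the triangle inequality delivers at best $h^{k+2}$ for the function values and $h^{k+1}$ for the derivatives at the nodes --- strictly weaker than the theorem for every $k\ge 4$ (and already for the derivative when $k=4$). The point is that $u_h-\bar u_h$ is much smaller \emph{at the mesh nodes} than its global $L^\infty$ norm, and this extra gain cannot be extracted from Theorem \ref{theo:2} alone. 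The paper proves item 5 by a separate argument: it returns to the Green's function representation \eqref{error:function} of $\bar e_h(x_i)=(u_h-\bar u_h)(x_i)$, bounds the term tested against $G_h-{\cal I}_{k-2}G_h$ by $h^{k-1}\|\bar e_h\|_2\lesssim h^{2k-1}$ using the elementwise smoothness of the discrete Green's function, and recognizes the remaining term as a pure $(k-1)$-point Gauss quadrature error, which is of order $h^{2k-2}$ by the classical quadrature error formula. You would need to reproduce an argument of this kind (or an equivalent duality/quadrature estimate localized at the nodes) to close item 5; the global supercloseness route cannot reach $h^{2k-2}$.
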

\begin{proof} We only prove \eqref{super-collocation:nodes} since \eqref{approx:super1}-\eqref{collocation:super} follow directly from
Theorem \ref{theo:1} and Theorem \ref{theo:2}.
 In light of \eqref{error:function}, we have
\begin{eqnarray*}
   \left|\bar e_h(x_i)\right|&=&\left|(-\alpha \bar e_h''+\beta \bar e_h'+\gamma \bar e_h, G_h(x_i,\cdot)-{\cal I}_{k-2}G_h(x_i,\cdot))+I\right|\\
   &\lesssim & h^{k-1}\|e_h\|_{2}+|I|,
\end{eqnarray*}
  where we used that $G_h\in C^{k}(\tau_j)$ and
\[
   I=(\gamma  \bar  u_h-f,{\cal I}_{k-2}G(x_i,\cdot))_{*}+(\gamma \bar u_h-f,{\cal I}_{k-2}G(x_i,\cdot)).
\]
  Again we use the error of Gauss numerical quadrature and the fact $\|{\cal I}_{k-2}G_h\|_{k-2,\infty}\lesssim 1$
  to get
\[
    |I|\lesssim h^{2k-2}(\|f\|_{2k-2,\infty}+\|\bar  u_h\|_{k,\infty})\|{\cal I}_{k-2}G_h\|_{k-2,\infty}\lesssim
    h^{2k-2}\|u\|_{2k,\infty},
\]
  and thus
 \[
  \left|\bar e_h(x_i)\right| \lesssim h^{2k-2}\|u\|_{2k,\infty}.
 \]
  Similarly, we can prove
 \begin{eqnarray*}
   |\bar e'_h(x_i)|\lesssim h^{2(k-1)}(\|u\|_{k+1,\infty}+\|f\|_{2k-2,\infty})\lesssim h^{2k-2}\|u\|_{2k,\infty}.
\end{eqnarray*}
  Then the proof is complete.
\end{proof}

\begin{remark}
   As we may observe from Theorem \ref{theo:1} and Theorem \ref{theo:3}, to achieve the same
   superconvergence result,
   the regularity assumption of the exact solution $u$ for the Gauss collocation method
   is much more stronger than that for the counterpart Petrov-Galerkin method.
\end{remark}

\section{Numerical experiments}

  In this section, we present some numerical examples to demonstrate
  the method and to verify the theoretical findings established in previous sections.

  In our numerical experiments, we solve the model problem \eqref{con_laws}
  by the $C^1$ Petrov-Galerkin method \eqref{PG} and the Gauss collocation method \eqref{collocation} with $k=3$ and $k=4$.
  We test various errors in our examples, including the $H^2$ error of  $u_h-u_I$ denoted as $\|u_h-u_I\|_2$,
  the maximum errors of $u-u_h$ and $(u-u_h)'$ at mesh points,  the maximum errors of $u-u_h$
  at interior roots of $\hat J_{k+1}^{-2,-2}(x)$,
   $(u-u_h)'$ and $(u-u_h)''$ at interior Gauss-Lobatto and Gauss  points, respectively.   They are defined by
\begin{align*}
&e_{un}=\max_{i} |(u-u_h)(x_i)|, \quad e_{u'n}=\max_{i} |(u-u_h)'(x_i)|,\\
&e_u=\max_{i,m} |(u-u_h)(l_{im})|, \quad e_{u'}=\max_{i,n} |(u-u_h)'(gl_{in})|, \quad e_{u''}=\max_{i,n} |(u-u_h)''(g_{in})|.
\end{align*}
Here $l_{im}, 1\leq m\leq k-3$ are interior roots of $\hat J_{k+1}^{-2,-2}(x)$, and
$gl_{in}, 1\leq n\leq k-2$ are interior Lobatto points, and
$g_{in}, 1\leq n\leq k-1$ are interior Gauss points in $\tau_i$.
For simplicity, we do not distinguish the error symbols when the method is clearly stated in the following tables.

{\it{Example 1.}} We consider the following equation with Dirichlet boundary condition:
\begin{equation}\label{model}
\begin{cases}
-\alpha u''(x) + \beta u'(x) +\gamma u(x) = f(x), \quad x\in[0,1],\\
u(0) = u(1) = 0.
\end{cases}
\end{equation}

 We take the constant coefficients as
\[
 \alpha=\beta=\gamma=1,
\]
  and choose the right-hand side function $f$ such that the exact solution to this problem is
$$u(x) = \sin(\pi x).$$

 Non-uniform meshes of $N$ elements  are used in our numerical experiments with $N=2,\ldots,32$, which are obtained
 by randomly and independently perturbing each
node of a uniform mesh by up to some percentage.  To be more precise,
$$x_j = \frac{j}{N} + 0.01\frac{1}{N} \sin(\frac{j\pi}{N})\text{randn}(),\quad 0\leq j\leq N,$$
where $randn()$ returns a uniformly distributed random number in $(0,1).$

We list in Table \ref{C1PG_exam1} various approximation errors calculated by the
$C^1$ Petrov-Galerkin method for $k=3,4$. As we may observe, both the convergence rates of the error $e_{un}$ and $e_{u'n}$
are $2k-2$, and the convergence rate of $e_{u},e_{u'},e_{u''}$ is $k+2,k+1,k$, respectively. All these results are consistent
with our theoretical findings in Theorem \ref{theo:1}.

\begin{table}[H]
   \caption{\small{Errors, corresponding convergence rates for $C^1$ Petrov-Galerkin method,  $\alpha=\beta=\gamma=1$.}}\label{C1PG_exam1}
  \vspace{0.1cm}
   \centerline{
   {\footnotesize
  \begin{tabular}{cccccccccccc}
  \hline
  \quad& \quad& \multicolumn{2}{c}{$e_{un}$}&\multicolumn{2}{c}{$e_{u'n}$} &
  \multicolumn{2}{c}{$e_u$}& \multicolumn{2}{c}{$e_{u'}$} & \multicolumn{2}{c}{$e_{u''}$}\\
  \hline
  $k$ & $N$ & error & order& error & order& error & order & error & order & error & order\\
  \hline
  \quad & 2&  8.03e-04 & - &6.11e-03 & -& - & - & 7.59e-03 & - & 1.31e-01 & -\\
  \quad & 4 & 7.02e-05 & 3.49  & 4.92e-04 & 3.61 & - & - & 5.61e-04 & 3.73  & 1.66e-02	&2.98 \\
        3 & 8 & 4.59e-06 & 3.95  & 3.02e-05 & 4.04 &- & - & 3.73e-05 & 3.92  & 2.18e-03	&2.93 \\
  \quad & 16 &2.91e-07 & 4.04  & 1.90e-06 & 4.05 &- & - & 2.66e-06	&3.87  & 2.67e-04	&3.03 \\
    \quad & 32 &1.80e-08	& 4.00  & 1.18e-07 & 3.99  & - & - & 1.77e-07 & 3.90  & 3.38e-05&2.98 \\
  \hline \hline
    \quad & 2&2.88e-05 & - & 2.23e-05 & - & 7.54e-05& - &8.72e-04 & - & 1.30e-02 & -\\
  \quad & 4 & 4.25e-07 & 6.10  & 2.47e-07 & 6.51  & 1.36e-06 & 5.81&2.44e-05&5.17 	&8.88e-04 & 3.88 \\
        4 & 8 & 6.53e-09 & 6.21  & 5.38e-09 & 5.69  & 2.14e-08 & 6.17 	& 7.91e-07 & 5.10 	& 5.47e-05 & 4.02 \\
  \quad & 16 &1.04e-10 & 5.96  & 1.04e-10 & 5.67  & 3.45e-10 & 5.94& 2.64e-08& 4.89 	&3.73e-06 & 3.87 \\
    \quad & 32 & 1.62e-12 & 6.00  & 1.84e-12	&5.83  & 5.50e-12 & 5.97  & 8.62e-10 & 4.94 &2.30e-07 & 4.02 \\
  \hline
    \end{tabular}}}
\end{table}

We next test the superconvergence behavior of the $C^1$ Gauss collocation method.
 We  present in Table \ref{C1collo_exam1}
  the numerical data for various errors and the corresponding convergence rates calculated by the $C^1$ Gauss collocation method.
We observe  a convergence rate of $2k-2$ for $e_{un}$ and
$e_{u'n}$, $k+2$ for $e_u$,   $k+1$ for $e_{u'}$, and  $k$ for $e_{u''}$, which confirms the theory established in Theorem \ref{theo:3}.

\begin{table}[H]
   \caption{\small{Errors, corresponding convergence rates for $C^1$
   Gauss collocation method, $\alpha=\beta=\gamma=1$.}}\label{C1collo_exam1}
   \vspace{0.1cm}
   \centerline{
   {\footnotesize
  \begin{tabular}{cccccccccccc}
  \hline
  \quad& \quad& \multicolumn{2}{c}{$e_{un}$}& \multicolumn{2}{c}{$e_{u'n}$} & \multicolumn{2}{c}{$e_u$} & \multicolumn{2}{c}{$e_{u'}$} & \multicolumn{2}{c}{$e_{u^{''}}$}\\
  \hline
  $k$ & $N$ & error & order & error & order & error & order & error & order & error & order\\
  \hline
  \quad & 2 & 5.25e-03& - & 1.36e-02 & - & - & - & 1.44e-02 & - &8.32e-02 & -\\
  \quad & 4 & 2.88e-04 & 4.13  & 7.26e-04 & 4.18 & - & - &8.35e-04	&4.06 	&1.16e-02&2.84 \\
        3 & 8 & 1.82e-05 & 4.07  & 4.66e-05 & 4.05 & - & - & 5.89e-05 & 3.91  & 1.61e-03 & 2.85 \\
  \quad & 16 &1.16e-06 & 3.94  & 2.91e-06 & 3.96 &- & - & 4.01e-06	&3.84& 	1.91e-04	&3.08 \\
    \quad & 32 & 7.18e-08 & 4.01  & 1.81e-07	&4.01  & - & - & 2.65e-07 & 3.92  & 2.45e-05	&2.96 \\
  \hline\hline
    \quad & 2 &1.32e-05 & - & 1.04e-04& - & 1.98e-04 & - & 9.54e-04& - & 7.12e-03 & -\\
  \quad & 4 & 2.92e-07 & 5.48  & 1.79e-06 & 5.85  & 3.14e-06 & 5.96 	&3.32e-05 & 4.83  & 5.77e-04 & 3.63 \\
        4 & 8 & 4.62e-09 & 5.97  & 2.80e-08 & 5.99  & 5.09e-08 & 5.94  & 1.05e-06	&4.98  & 3.89e-05 & 3.89 \\
  \quad & 16 &7.56e-11 & 6.06  & 4.40e-10 & 6.12 	&8.61e-10	&6.01  & 3.40e-08	&5.04  & 2.46e-06 & 3.98 \\
    \quad & 32 & 1.18e-12&6.08 & 6.87e-12 & 6.08  & 1.28e-11 & 6.15  & 1.05e-09	&5.08  & 1.52e-07 & 4.02 \\
  \hline
    \end{tabular}}}
\end{table}

Furthermore, we also test the supercloseness result between the $C^1$ numerical solution
and the Jacobi truncation projection  of the exact solution under the $H^2$ norm for
two different choices of parameters: i.e., $\beta=0,\gamma\neq 0$ and $\beta\neq 0,\gamma\neq 0$.
Listed in Table \ref{C1uhuIh2_exam1} are the approximation  errors of $\|u_h-u_I\|_2$ and
their  corresponding convergence  rates.
From Table \ref{C1uhuIh2_exam1} we observe that,
for both  Petrov-Galerkin and Gauss collocation methods, the convergence rate of $\|u_h-u_I\|_2$ is $k$ in case of $\beta\neq 0,\gamma\neq 0$.
However, when $\beta=0,\gamma\neq 0$, the convergence rate is
 $k$ for the Gauss collocation method, and $k+1$ for the Petrov-Galerkin method, which
 is one order higher than that for the Gauss collocation method.

\begin{table}[H]
   \caption{\small{ $\|u_h-u_I\|_2$ and the corresponding convergence rates, constant coefficients.}}\label{C1uhuIh2_exam1}
   \vspace{0.1cm}
   \centerline{
   {\footnotesize
  \begin{tabular}{cccccccccc}
  \hline
   \multicolumn{10}{c}{$\|u_h-u_I\|_2$} \\
  \hline
   \multicolumn{2}{c}{\quad}& \multicolumn{4}{c}{$C^1$ Petrov-Galerkin}& \multicolumn{4}{c}{$C^1$ Gauss collocation}\\
   \hline
  \multicolumn{2}{c}{\quad}&  \multicolumn{2}{c}{$\alpha=\beta=\gamma=1$} & \multicolumn{2}{c}{$ \alpha=\gamma=1, \beta=0$} &  \multicolumn{2}{c}{$\alpha=\beta=\gamma=1$} & \multicolumn{2}{c}{$ \alpha=\gamma=1, \beta=0$}  \\
  \hline
  $k$ & $N$ & error & order & error & order  & error & order & error & order\\
  \hline
  \quad & 2 & 3.93e-02 & - & 5.57e-03 & - &1.12e-01 & - & 8.32e-02 & - \\
  \quad & 4 &5.15e-03 & 2.91 & 3.59e-04 & 3.94 & 1.36e-02 & 3.00  &1.07e-02&	2.93 \\
        3 & 8 & 6.47e-04 & 3.00  & 2.23e-05 & 3.99  & 1.72e-03 & 3.04  & 1.34e-03	&3.03 \\
  \quad & 16 &8.12e-05 & 3.04 & 1.40e-06 & 4.02  &2.17e-04 & 2.96  & 1.70e-04&	2.96 \\
    \quad & 32 &1.01e-05	&2.99  & 8.75e-08 & 4.01 & 2.70e-05& 3.01  & 2.12e-05	&3.03 \\
  \hline\hline
    \quad & 2 & 3.47e-03 & - & 2.23e-04& - &9.86e-03 & - & 8.27e-03 &-\\
  \quad & 4 & 2.24e-04 & 3.97  & 7.13e-06 & 4.99  & 6.63e-04 & 3.88  & 5.20e-04&4.02  \\
        4 & 8 & 1.40e-05 & 4.12  & 2.30e-07 & 5.03 & 4.14e-05 & 4.00 & 3.30e-05 & 3.94 \\
  \quad & 16 &8.80e-07 & 3.98  & 7.07e-09 & 5.01  & 2.59e-06 & 4.08   & 2.07e-06 & 4.05 \\
    \quad & 32 & 5.50e-08 & 4.00  & 2.22e-10 & 5.02  & 1.62e-07 & 4.05  & 1.30e-07 & 4.04 \\
  \hline
    \end{tabular}}}
\end{table}

{\it{Example 2.}} We consider the following equation with Dirichlet boundary condition:
\begin{equation}\label{model}
\begin{cases}
-(\alpha(x) u'(x))' + \beta(x) u'(x) +\gamma(x) u(x) = f(x), \quad x\in[0,1],\\
u(0) = u(1) = 0.
\end{cases}
\end{equation}
 In our experiments, we test the problems of variable coefficients and consider the following
  three  cases:
\begin{itemize}
\item Case 1: $\alpha(x)=e^x, \beta(x)=\cos(x), \gamma(x) = x;$
\item Case 2: $\alpha(x)=e^x, \beta(x)=0, \gamma(x) = x;$
\item Case 3: $\alpha(x)=e^x, \beta(x)=0, \gamma(x) = 0.$
\end{itemize}
The right-hand side function $f$ is chosen such that exact solution is
$$u(x) = \sin x(x^{12}-x^{11}).$$
We use the piecewise uniform meshes, which are constructed by equally dividing each interval $[0,\frac{2}{3}]$ and $[\frac{2}{3},1]$ into $N/2$ subintervals with $N=4, ... , 64$.

We present various approximation errors and the corresponding  convergence rates  in
Tables \ref{C1PG_exam2_k3}-\ref{C1PG_exam2_k4}
for the $C^1$ Petrov-Galerkin method, and  in Tables \ref{C1collo_exam2_k3}-\ref{C1collo_exam2_k4}
for the $C^1$ Gauss collocation method, for three different cases with $k=3,4$, respectively.
Again, we observe the same superconvergence results as those for the constant coefficients in Example 1, i.e., both errors $e_{un}$ and $e_{u'n}$ converge with a rate of $2k-2$, and
 the convergence  rates of  $e_{u}, e_{u'}, e_{u''}$ are $k+2, k+1, k,$ respectively.
 In other words, superconvergence results in Theorem \ref{theo:1} and Theorem \ref{theo:3}
 are still valid for the  case of variable coefficients.

\begin{table}[H]
   \caption{\small{ Errors and corresponding convergence rates for $C^1$ Petrov-Galerkin method, variable coefficients, $k=3$.}}\label{C1PG_exam2_k3}
   \vspace{0.1cm}
   \centerline{
   {\footnotesize
  \begin{tabular}{cccccccccc}
  \hline
  \quad& \quad& \multicolumn{2}{c}{$e_{un}$} & \multicolumn{2}{c}{$e_{u'n}$}
  & \multicolumn{2}{c}{$e_{u'}$} & \multicolumn{2}{c}{$e_{u''}$}\\
  \hline
  $k$ & $N$ & error & order & error & order & error & order & error & order \\
  \hline
  \multicolumn{10}{c}{Case 1}\\
   \hline
  \quad & 4 & 4.24e-04 & - & 4.42e-04 & - & 1.45e-02 & - & 4.98e-01& -\\
    \quad & 8 & 2.75e-05& 3.94 & 2.94e-05 & 3.91   & 1.38e-03 & 3.39 & 8.75e-02& 2.51 \\
  3 & 16 & 1.75e-06 & 3.97  & 1.87e-06 & 3.98  & 1.03e-04 &3.74  & 1.25e-02 & 2.81  \\
    \quad & 32 & 1.10e-07 & 4.00  & 1.17e-07 & 3.99  & 6.85e-06 & 3.91  & 1.63e-03 & 2.94  \\
    \quad & 64 & 6.86e-09	&4.00  & 7.34e-09 & 4.00   & 4.36e-07 & 3.97  & 2.05e-04 & 2.99 \\
  \hline
 \multicolumn{10}{c}{Case 2} \\
  \hline
  \quad & 4 & 3.39e-04 & - & 3.42e-04 & - & 1.37e-02 & - & 4.88e-01& -\\
    \quad & 8 & 2.25e-05 & 3.92  & 2.69e-05 & 3.67    &1.31e-03& 3.39  & 8.56e-02 & 2.51   \\
  3 & 16 &1.44e-06 & 3.96  & 1.88e-06& 3.84  & 9.74e-05 & 3.74  &1.22e-02 & 2.81   \\
    \quad & 32 & 9.03e-08&4.00  &1.23e-07 & 3.93  & 6.46e-06 & 3.91  & 1.58e-03& 2.95  \\
    \quad & 64 & 5.64e-09	&4.00  & 7.87e-09 & 3.97  &4.11e-07 & 3.98  & 2.00e-04 & 2.99    \\
  \hline
 \multicolumn{10}{c}{Case 3}\\
  \hline
  \quad & 4 &3.36e-04& - & 3.53e-04& -  & 1.37e-02 & - & 4.88e-01& -\\
    \quad & 8 & 2.26e-05 & 3.89  & 2.82e-05 & 3.65   & 1.30e-03 & 3.39 &	8.56e-02 & 2.51  \\
  3 & 16 & 1.44e-06& 3.97  & 1.97e-06& 3.84  & 9.72e-05 & 3.75  & 1.22e-02 &2.81  \\
    \quad & 32 & 9.05e-08	& 4.00  & 1.29e-07 & 3.93   & 6.45e-06 & 3.91  & 1.59e-03 & 2.95  \\
    \quad & 64 & 5.66e-09	& 4.00  & 8.26e-09& 3.97  & 4.09e-07& 3.98  & 2.00e-04 & 2.99  \\
  \hline
    \end{tabular}}}
\end{table}

\begin{table}[H]
   \caption{\small{ Errors and corresponding convergence rates for
   $C^1$ Petrov-Galerkin method, variable coefficients, $k=4$.}}\label{C1PG_exam2_k4}
   \vspace{0.1cm}
   \centerline{
   {\footnotesize
  \begin{tabular}{cccccccccccc}
  \hline
  \quad& \quad& \multicolumn{2}{c}{$e_{un}$} & \multicolumn{2}{c}{$e_{u'n}$}
  & \multicolumn{2}{c}{$e_u$} & \multicolumn{2}{c}{$e_{u'}$} & \multicolumn{2}{c}{$e_{u''}$}\\
  \hline
  $k$ & $N$ & error & order & error & order & error & order & error & order & error & order \\
  \hline
  \multicolumn{12}{c}{Case 1}\\
   \hline
  \quad & 4 & 2.56e-06 & - & 1.71e-06& -& 4.10e-05& - & 1.19e-03 & - & 6.21e-02& -\\
   \quad & 8 & 4.06e-08 & 5.98  & 3.46e-08 & 5.62  & 8.78e-07 & 5.55 & 4.83e-05 & 4.63  & 4.91e-03 & 3.66   \\
  4 & 16 & 6.25e-10 & 6.02  & 6.31e-10 & 5.78  & 1.35e-08 & 6.03  & 1.38e-06 & 5.13  & 2.90e-04 & 4.08  \\
    \quad & 32 & 1.05e-11& 5.89  & 1.03e-11 & 5.94  & 2.25e-10	&5.90  & 4.78e-08 & 4.85  & 1.94e-05 & 3.90 \\
    \quad & 64 & 1.63e-13 & 6.01  & 1.65e-13 & 5.96  & 4.02e-12 & 5.81  &1.52e-09& 4.98  & 1.24e-06 & 3.97 \\
  \hline
 \multicolumn{12}{c}{Case 2} \\
  \hline
  \quad & 4 & 1.25e-06& - & 8.07e-07 & -& 3.91e-05 & - &1.16e-03 & - & 6.07e-02 & -\\
   \quad & 8 & 2.01e-08 & 5.96  &2.10e-08& 5.27  & 8.31e-07	& 5.56& 	4.62e-05& 4.65  & 4.79e-03& 3.66  \\
  4 & 16 & 3.10e-10& 6.02  & 3.99e-10& 5.71  & 1.26e-08& 6.05 &1.33e-06	&5.12  & 2.84e-04 & 4.08 \\
    \quad & 32 &5.12e-12	&5.92  &6.70e-12 & 5.90 & 2.13e-10& 5.89 	&4.59e-08  & 4.86 &	1.89e-05	& 3.90  \\
    \quad & 64 &8.02e-14 & 6.00 & 1.05e-13 & 6.00  & 3.89e-12 & 5.77 	 & 1.46e-09&	4.98 & 1.21e-06 & 3.97  \\
  \hline
 \multicolumn{12}{c}{Case 3}\\
  \hline
  \quad & 4 & 9.26e-07& - & 5.75e-07 & -&3.93e-05 & - & 1.16e-03& - & 6.07e-02 & -\\
   \quad & 8 & 1.48e-08 & 5.96  & 1.54e-08 & 5.22 	&8.33e-07& 5.56 	&4.62e-05	&4.65 &4.79e-03 & 3.66  \\
  4 & 16 & 2.29e-10 & 6.02  & 2.95e-10 & 5.71 & 1.26e-08	& 6.05 & 1.33e-06	&5.11  & 2.84e-04& 4.08 \\
    \quad & 32 & 3.81e-12 & 5.91  & 4.94e-12 & 5.90  & 2.13e-10 & 5.89  &4.58e-08&4.86  & 1.89e-05 & 3.90  \\
    \quad & 64 & 5.74e-14	&6.05  & 9.17e-14 & 5.75 	& 3.91e-12&5.77 	&1.46e-09	&4.98  & 1.21e-06 & 3.97  \\
  \hline
    \end{tabular}}}
\end{table}

\begin{table}[H]
   \caption{\small{ Errors and corresponding convergence rates for $C^1$ Gauss collocation method, variable coefficients, $k=3$.}}\label{C1collo_exam2_k3}
   \vspace{0.1cm}
   \centerline{
   {\footnotesize
  \begin{tabular}{cccccccccc}
  \hline
    \quad& \quad& \multicolumn{2}{c}{$e_{un}$} & \multicolumn{2}{c}{$e_{u'n}$}
  & \multicolumn{2}{c}{$e_{u'}$} & \multicolumn{2}{c}{$e_{u''}$}\\
  \hline
  $k$ & $N$  & error & order & error & order & error & order & error & order \\
  \hline
  \multicolumn{10}{c}{Case 1}\\
   \hline
  \quad & 4 & 2.12e-03 & - & 3.30e-03& - & 1.29e-02 & - & 7.15e-02& -\\
    \quad & 8 & 1.43e-04 & 3.89  & 5.10e-04& 2.69  & 1.33e-03	& 3.28  & 1.35e-02 & 2.41  \\
  3 & 16 & 8.87e-06 & 4.01  & 4.71e-05 & 3.44   & 1.01e-04 & 3.72 &	2.03e-03&	2.73   \\
    \quad & 32 & 5.49e-07	&4.02  & 3.42e-06 & 3.78 & 6.67e-06& 3.92  & 2.76e-04	& 2.88  \\
    \quad & 64 & 3.42e-08	 & 4.00  & 2.24e-07& 3.93    & 4.22e-07 & 3.98 	& 3.59e-05 & 2.94 \\
  \hline
\multicolumn{10}{c}{Case 2}\\
  \hline
  \quad & 4 & 2.30e-03 & - & 2.97e-03 & - &1.42e-02 & - & 9.18e-02& -\\
    \quad & 8 & 1.56e-04&	3.89 &   5.01e-04 & 2.57   &1.45e-03& 3.30  & 1.71e-02&2.43    \\
  3 & 16 & 9.62e-06 & 4.02  & 4.72e-05& 3.41  &1.09e-04	& 3.72  & 2.55e-03& 2.74   \\
    \quad & 32 & 5.95e-07&4.01  & 3.45e-06 & 3.77   & 7.25e-06 & 3.92  & 3.46e-04 & 2.88   \\
    \quad & 64 & 3.71e-08& 4.00  & 2.26e-07 & 3.93  &4.59e-07 & 3.98  & 4.49e-05 & 2.94   \\
  \hline
\multicolumn{10}{c}{Case 3}\\
  \hline
  \quad & 4 & 2.37e-03& - &2.84e-03 & - &1.45e-02& - &9.13e-02& -\\
    \quad & 8 & 1.59e-04	& 3.89  & 4.85e-04& 2.55 & 1.47e-03& 3.30  &1.70e-02 & 2.43 \\
  3 & 16 & 9.82e-06& 4.02  & 4.60e-05 & 3.40   & 1.11e-04& 3.73  & 2.54e-03& 2.74   \\
    \quad & 32 & 6.08e-07&4.01  & 3.37e-06& 3.77  & 7.33e-06 & 3.92  & 3.45e-04 & 2.88    \\
    \quad & 64 & 3.79e-08&4.00  & 2.21e-07& 3.93   &4.64e-07 & 3.98  & 4.49e-05 & 2.94  \\
  \hline
    \end{tabular}}}
\end{table}

\begin{table}[H]
   \caption{\small{ Errors and corresponding convergence rates for $C^1$ Gauss collocation method, variable coefficients, $k=4$.}}\label{C1collo_exam2_k4}
   \vspace{0.1cm}
   \centerline{
   {\footnotesize
  \begin{tabular}{cccccccccccc}
  \hline
    \quad& \quad& \multicolumn{2}{c}{$e_{un}$} & \multicolumn{2}{c}{$e_{u'n}$}
  & \multicolumn{2}{c}{$e_u$} & \multicolumn{2}{c}{$e_{u'}$} & \multicolumn{2}{c}{$e_{u''}$}\\
  \hline
  $k$ & $N$ & error & order & error & order & error & order & error & order & error & order \\
  \hline
  \multicolumn{12}{c}{Case 1}\\
  \hline
  \quad & 4 & 1.45e-05 & - & 1.16e-04& -& 8.66e-05 & - & 1.00e-03 & - & 8.32e-03 & -\\
   \quad & 8 & 4.69e-07 & 4.95  & 3.01e-06 & 5.27 &1.53e-06	& 5.82 	&3.87e-05&4.69 & 7.68e-04 & 3.44  \\
  4 & 16 & 1.25e-08 & 5.23  & 4.84e-08& 5.96 & 1.64e-08 & 6.55  &1.14e-06 & 5.09& 5.70e-05&3.75   \\
    \quad & 32 &2.23e-10	&5.81  & 7.53e-10& 6.01  & 4.01e-10& 5.35  & 3.76e-08 & 4.92  & 3.73e-06& 3.94  \\
    \quad & 64 &3.61e-12	&5.95  & 1.18e-11 & 6.00  & 7.73e-12& 5.70  & 1.18e-09& 4.99  & 2.34e-07 & 4.00   \\
  \hline
\multicolumn{12}{c}{Case 2}\\
  \hline
  \quad & 4 & 1.60e-05 & - & 1.15e-04& - & 9.17e-05& - & 1.09e-03 & - & 1.06e-02 & -\\
   \quad & 8 & 4.82e-07& 5.05  &3.09e-06& 5.22  & 1.63e-06	&5.81  & 4.18e-05 & 4.70  & 9.85e-04 & 3.42 \\
  4 & 16 &1.31e-08& 5.20  & 5.06e-08 & 5.93  & 1.78e-08 & 6.52  & 1.25e-06& 5.06 	&7.21e-05&3.77   \\
    \quad & 32 & 2.35e-10	& 5.80  & 7.92e-10 & 6.00  & 4.08e-10 & 5.45  & 4.10e-08 & 4.93  & 4.68e-06& 3.94   \\
    \quad & 64 &3.80e-12& 5.95  & 1.24e-11 & 6.00&	7.92e-12 & 5.69  & 1.28e-09&5.00  & 2.93e-07& 4.00 \\
  \hline
\multicolumn{12}{c}{Case 3}\\
  \hline
  \quad & 4 & 1.61e-05 & - &1.15e-04 & -& 9.18e-05 & - & 1.09e-03 & - & 1.05e-02 & -\\
   \quad & 8 &4.84e-07& 5.06 &3.08e-06& 5.22  & 1.63e-06& 5.82 	&4.18e-05&4.70  & 9.86e-04& 3.42 \\
  4 & 16 & 1.32e-08 & 5.20  & 5.07e-08& 5.92 & 1.78e-08	& 6.52  & 1.25e-06 & 5.06 & 7.21e-05 & 3.77   \\
    \quad & 32 & 2.37e-10 & 5.80  & 7.96e-10 & 5.99  & 4.06e-10& 5.45  & 4.10e-08 & 4.93  & 4.68e-06& 3.95  \\
    \quad & 64 &3.84e-12	& 5.95  & 1.25e-11& 6.00 & 7.89e-12	& 5.69 	&1.28e-09	& 5.00  & 2.93e-07 & 4.00   \\
  \hline
    \end{tabular}}}
\end{table}

We also test the error $\|u_I-u_h\|_2$ in the above three cases of variable coefficients.
We list in Table \ref{C1uhuIh2_exam2_k3} and Table \ref{C1uhuIh2_exam2_k4} the numerical data  $\|u_I-u_h\|_2$ and
the convergence rate for $C^1$ Petrov-Galerkin approximation and Gauss collocation approximation with $k=3,4$.
 We observe that the convergence rate is always $k$ in different choices of variable coefficients, including the case $\beta=0$, Recall that for the constant coefficients case, the convergent rate of the $C^1$ Petrov-Galerkin approximation is $k+1$ when $\beta = 0$.


\begin{table}[H]
   \caption{\small{$\|u_h-u_I\|_2$ and corresponding convergence rates, variable coefficients, $k=3$.}}\label{C1uhuIh2_exam2_k3}
   \vspace{0.1cm}
   \centerline{
   {\footnotesize
  \begin{tabular}{ccccccccc}
  \hline
\multirow{3}{*}{\quad}& \multicolumn{8}{c}{$\|u_h-u_I\|_2$} \\
\cline{2-9}
  &  $k$ & $N$  & error & order & error & order & error & order\\
  \cline{2-9}
   &\quad& \quad& \multicolumn{2}{c}{Case 1}&\multicolumn{2}{c}{Case 2} &\multicolumn{2}{c}{Case 3} \\
  \hline
  \quad&\quad & 4 & 2.35e-02& - & 1.89e-02 & - & 1.89e-02 &-\\
  \quad&\quad & 8 & 3.46e-03	& 2.77  & 2.82e-03	& 2.75  & 2.82e-03&	2.75   \\
   $C^1$ Petrov-Galerkin &3 & 16 &  4.49e-04&	2.94   & 3.67e-04& 2.94  & 3.67e-04&	2.94     \\
  \quad&\quad & 32 & 5.66e-05 & 2.99   &4.64e-05	& 2.99  &4.64e-05	& 2.99   \\
   \quad& \quad & 64 & 7.10e-06 & 3.00  & 5.81e-06& 3.00 &  5.81e-06&	3.00  \\
  \hline
 \quad& \quad & 4& 1.86e-01& - &1.93e-01& - &1.93e-01 &-\\
  \quad&\quad & 8& 2.65e-02	&2.81  & 2.75e-02 & 2.81  & 2.75e-02 & 2.81 \\
 $C^1$ Gauss collocation& 3 & 16&  3.37e-03 & 2.97   & 3.51e-03& 2.97    & 3.51e-03 & 2.97 \\
  \quad&\quad & 32 &4.22e-04 & 3.00    & 4.39e-04 & 3.00    & 4.39e-04	&3.00 \\
  \quad&\quad & 64 & 5.27e-05& 3.00  &5.49e-05	&3.00   & 5.49e-05	& 3.00 \\
\hline
    \end{tabular}}}
\end{table}

\begin{table}[H]
   \caption{\small{ $\|u_h-u_I\|_2$ and corresponding convergence rates,  variable coefficients, $k=4$.}}\label{C1uhuIh2_exam2_k4}
   \vspace{0.1cm}
   \centerline{
   {\footnotesize
  \begin{tabular}{ccccccccc}
  \hline
\multirow{3}{*}{\quad}& \multicolumn{8}{c}{$\|u_h-u_I\|_2$} \\
  \cline{2-9}
  &$k$ & $N$ & error & order & error & order & error & order\\
  \cline{2-9}
  &\quad& \quad& \multicolumn{2}{c}{Case 1}&\multicolumn{2}{c}{Case 2} &\multicolumn{2}{c}{Case 3}\\
  \hline
\quad&   \quad & 4 & 5.03e-03 & - & 4.48e-03& - & 4.48e-03&-\\
\quad&  \quad & 8 & 3.53e-04&3.83  & 3.17e-04	&3.82   & 3.17e-04&	3.82    \\
   $C^1$ Petrov-Galerkin &  4 & 16 & 2.24e-05 & 3.98  & 2.01e-05&	3.98   & 2.01e-05	& 3.98   \\
\quad&  \quad & 32 & 1.40e-06 & 4.00   & 1.26e-06 &  4.00  & 1.26e-06&	4.00   \\
  \quad&  \quad & 64 & 8.75e-08& 4.00   & 7.86e-08 & 4.00 & 7.86e-08& 	4.00   \\
  \hline
  \quad&\quad & 4 & 2.09e-02 & - & 2.18e-02& - & 2.18e-02&-\\
\quad &\quad & 8 & 1.32e-03& 3.99 & 1.38e-03& 3.99 & 1.38e-03 & 3.99 \\
 $C^1$ Gauss collocation& 4 & 16& 7.74e-05 & 4.09   &8.12e-05 & 4.08  &8.12e-05 & 4.08 \\
 \quad& \quad & 32 & 4.86e-06& 3.99  & 5.09e-06&4.00   & 5.09e-06& 4.00 \\
  \quad&\quad & 64& 3.06e-07	&3.99   & 3.20e-07& 3.99  & 3.20e-07 & 3.99  \\
\hline
    \end{tabular}}}
\end{table}

To sum up, superconvergence phenomena for problems with  variable coefficients
under non-uniform meshes still exist, and the superconvergence behavior for variable coefficients problems is similar with that for the
constant coefficients problems.

%
%
%

\section{Conclusion}

 In this work, we have presented a unified approach to study superconvergence properties of $C^1$ Petrov-Galerkin
  and Gauss collocation methods for one-dimensional elliptic equations. Our main theoretical results include the proof of the $2k-2$ superconvergence rate for both solution and its first order derivative approximations at grid points, the $k+2$-th order  function value approximation at roots of the Jacobi polynomial $\hat J_{k+1}^{-2,-2}(x)$, the $k+1$-th order  derivative approximation at roots of $ (\hat J_{k+1}^{-2,-2})'(x)$ (i.e., the Lobatto points), and the $k$-th order second order derivative approximation at the Gauss points.
 An unexpected discovery is that the superconvergence rate of the first order derivative error at mesh points can reach as high as $2k-2$, which almost doubles the optimal convergence rate $k$. The superconvergence points for the second order derivative approximation is also novel.

  Our analysis indicate that for constant coefficients, the Gauss collocation method is essentially equivalent to the Petrov-Galerkin method up to practically neglect able numerical quadrature errors, see (\ref{bilinear:1}).  Indeed, we always use numerical quadrature instead of exact integration in practice.

Comparing with the traditional $C^0$ Galerkin method, the major gain of the $C^1$ Petrov-Galerkin method discussed in this work is the $2k-2$ convergence rate of the derivative approximation at nodes, with the sacrifice of function value convergence rate at nodes dropping from $2k$ to $2k-2$.

Comparing with the $C^1$ Galerkin method studied in \cite{Wahlbin}, the $C^1$ Petrov-Galerkin method discussed in this work has equal or better convergence rates in all respect. It seems that that the $L^2$ test function is superior to the $C^1$ test function. Therefore, the $C^1$ Petrov-Galerkin method is a method to recommend if one is also interested in derivative approximations.

 Based on the analysis, extension of our results to the higher dimensional tensor-product space is feasible.

\end{document}